\newcommand{\noun}[1]{\textsc{#1}}
\theoremstyle{plain}
      \newtheorem{prop}{\protect\propositionname}
      \newtheorem{prop}{\protect\propositionname}[chapter]
\theoremstyle{definition}
      \newtheorem{defn}{\protect\definitionname}
      \newtheorem{defn}{\protect\definitionname}[chapter]
\providecommand{\definitionname}{Definition}
\providecommand{\propositionname}{Proposition}
\begin{document}
\title{Optimal separator for an hyperbola\\
Application to localization}
\author{Luc Jaulin}
\institution{Lab-Sticc, ENSTA-Bretagne}

\maketitle
\textbf{Abstract}. This paper proposes a minimal contractor and a
minimal separator for an area delimited by an hyperbola of the plane.
The task is facilitated using actions induced by the hyperoctahedral
group of symmetries. An application related to the localization of
an object using a TDoA (Time Differential of Arrival) technique is
proposed. 

\section{Introduction}

Consider the quadratic function 
\begin{equation}
f(\mathbf{q},\mathbf{x})=q_{0}+q_{1}x_{1}+q_{2}x_{2}+q_{3}x_{1}^{2}+q_{4}x_{1}x_{2}+q_{5}x_{2}^{2}\label{eq:fqdex}
\end{equation}
where $\mathbf{q}=(q_{0},\dots,q_{5})$ is the parameter vector and
$\mathbf{x}=(x_{1},x_{2})$ is the vector of variables. Equivalently,
we can write the function in a matrix form: 
\begin{equation}
f(\mathbf{q},\mathbf{x})=\mathbf{x}^{\text{T}}\cdot\underset{\mathbf{Q}}{\underbrace{\left(\begin{array}{cc}
q_{3} & \frac{1}{2}q_{4}\\
\frac{1}{2}q_{4} & q_{5}
\end{array}\right)}}\cdot\mathbf{x}+(q_{1}\,\,\,\,q_{2})\cdot\mathbf{x}+q_{0}.
\end{equation}

The zeros of $f(\mathbf{q},\mathbf{x})$ is a conic section (a circle
or other ellipse, a parabola, or a hyperbola). The characteristic
polynomial of the matrix $\mathbf{Q}$ is 
\[
\begin{array}{ccc}
P(s) & = & (s-q_{3})(s-q_{5})-\frac{1}{4}q_{4}^{2}\\
 & = & s^{2}-(q_{3}+q_{5})s+q_{3}q_{5}-\frac{1}{4}q_{4}^{2}
\end{array}
\]
Its discriminant is
\[
\begin{array}{ccc}
\Delta & = & (q_{3}+q_{5})^{2}-4q_{3}q_{5}+q_{4}^{2}\\
 & = & q_{3}^{2}+q_{5}^{2}-2q_{3}q_{5}+q_{4}^{2}\\
 & = & (q_{3}-q_{5})^{2}+q_{4}^{2}
\end{array}
\]
which is always positive. Which means that the matrix $\mathbf{Q}$
has two real values (this is not a surprise since $\mathbf{Q}$ is
symmetric). We will assume here that $\mathbf{Q}$ has eigen values
with different signs. It means that

\[
\begin{array}{cc}
 & \left(q_{3}+q_{5}-\sqrt{\Delta}\right)\left(q_{3}+q_{5}+\sqrt{\Delta}\right)<0\\
\Leftrightarrow & \left(q_{3}+q_{5}\right)^{2}-\Delta<0\\
\Leftrightarrow & \left(q_{3}+q_{5}\right)^{2}-((q_{3}-q_{5})^{2}+q_{4}^{2})<0\\
\Leftrightarrow & q_{3}^{2}+q_{5}^{2}+2q_{3}q_{5}-(q_{3}^{2}+q_{5}^{2}-2q_{3}q_{5}+q_{4}^{2})<0\\
\Leftrightarrow & 4q_{3}q_{5}-q_{4}^{2}<0\\
\Leftrightarrow & \text{det}\mathbf{Q}<0
\end{array}
\]

Define the set
\begin{equation}
\mathbb{X}=\left\{ (x_{1},x_{2}|f(\mathbf{q},\mathbf{x})\leq0\right\} .
\end{equation}
 In our case $\mathbb{X}$ has a boundary which is an hyperbola and
will be called an \emph{hyperbolic area}. In this paper, we propose
an interval-based method \citep{Moore79} to generate an optimal separator
\citep{DesrochersEAAI2014} for the set $\mathbb{X}$. The technique
is similar to that proposed in \citep{jaulin2023ellipse} for ellipses.
This separator will be used to generate an inner and an outer approximations
for $\mathbb{X}$. 

As an application, we will consider the problem of the localization
of an object using a TDoA (Time Difference of Arrival) technique.
TDoA is a classical positioning methodology that determines the difference
between the time-of-arrival of signals. TDoA is often used in a real-time
to accurately calculate the location of some tracked entities.

This paper is organized as follows. Section \ref{sec:Symmetries}
introduces the notion of symmetries that will be used in the construction
of the separators. Section \ref{sec:Cardinal-functions} defines the
concept of cardinal function associated with a set. Section \ref{sec:Separator}
builds the separator for the hyperbolic area using cardinal functions
and symmetries. Section \ref{sec:Application} illustrates the use
of the separator to approximate the set of position for an object
from the measure of pseudo-distances. Section \ref{sec:Conclusion}
concludes the paper.

\section{Symmetries\label{sec:Symmetries}}

The methodology presented in this paper is based on symmetries of
the equation of $f(\mathbf{q},\mathbf{x})=0$. This section defines
the main concepts related to symmetries that will be used.

\subsection{Conjugate pair}

Consider an equation of the form
\begin{equation}
f(\mathbf{q},\mathbf{x})=0.
\end{equation}

The pair of transformations $(\sigma,\gamma)$ is \emph{conjugate}
with respect to $f$ if 
\begin{equation}
f(\gamma(\mathbf{q}),\sigma(\mathbf{x}))=0\Leftrightarrow f(\mathbf{q},\mathbf{x})=0.\label{eq:conjugate}
\end{equation}

\subsection{Hyperoctahedral group}

Transformations that will be consider are limited to the \emph{hyperoctahedral
group} $B_{n}$ \citep{Coxeter99} which is the group of symmetries
of the hypercube $[-1,1]^{n}$ of $\mathbb{R}^{n}$. The group $B_{n}$
corresponds to the group of $n\times n$ orthogonal matrices whose
entries are integers. Each line and each column of a matrix should
contain one and only one non zero entry which should be either $1$
or $-1$. Figure \ref{fig: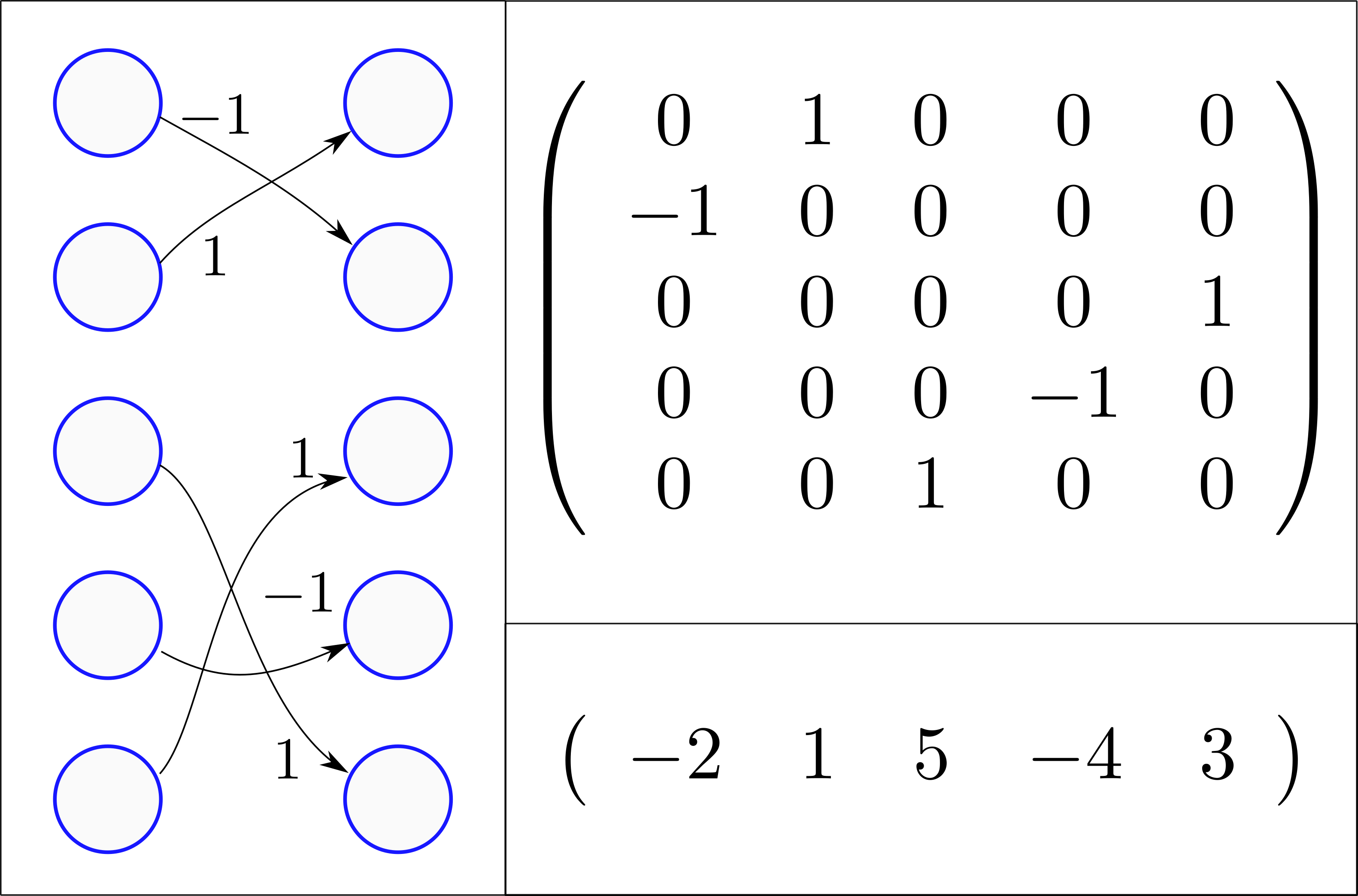} shows different notations
usually considered to represent a symmetry $\sigma$ of $B_{5}$.
We will prefer the Cauchy one line notation \citep{Wussing07} which
is shorter. We should understand the symmetry $\sigma$ of the figure
as the function:
\begin{equation}
\sigma(x_{1},x_{2},x_{3},x_{4},x_{5})=(-x_{2},x_{1},x_{5},-x_{4},x_{3}).
\end{equation}

\begin{figure}[H]
\begin{centering}
\includegraphics[width=7cm]{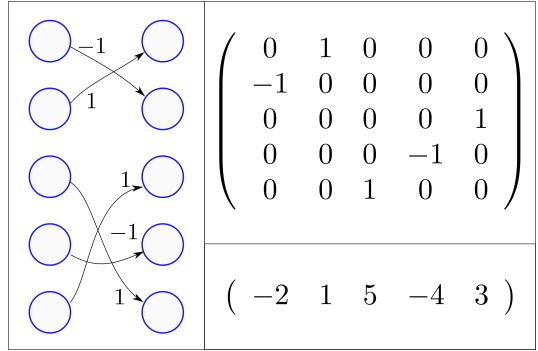}
\par\end{centering}
\caption{Different representations of an element $\sigma$ of $B_{5}$. Left:
graph; Top right: Matrix notation; Bottom right: Cauchy one line notation}
\label{fig:bn_repr1.png}
\end{figure}

In the plane, the group $B_{2}$ has eight elements. If we use the
matrix form, the elements of $B_{2}$ are 
\begin{equation}
\begin{array}{cccc}
\left(\begin{array}{cc}
1 & 0\\
0 & 1
\end{array}\right), & \left(\begin{array}{cc}
-1 & 0\\
0 & 1
\end{array}\right), & \left(\begin{array}{cc}
0 & 1\\
1 & 0
\end{array}\right), & \left(\begin{array}{cc}
-1 & 0\\
0 & -1
\end{array}\right)\\
\left(\begin{array}{cc}
1 & 0\\
0 & -1
\end{array}\right), & \left(\begin{array}{cc}
0 & 1\\
-1 & 0
\end{array}\right), & \left(\begin{array}{cc}
0 & -1\\
1 & 0
\end{array}\right), & \left(\begin{array}{cc}
0 & -1\\
-1 & 0
\end{array}\right)
\end{array}\label{eq:s0:s7}
\end{equation}
Equivalently with the Cauchy notation, these 8 elements of $B_{2}$
are respectively
\begin{equation}
\left\{ (1,2),(-1,2),(2,1),(-1,-2),(1,-2),(-2,1),(2,-1),(-2,-1)\right\} .
\end{equation}

A symmetry of $B_{2}$ in a matrix form, satisfies
\begin{equation}
\sigma=\left(\begin{array}{cc}
\sigma_{11} & \sigma_{12}\\
\sigma_{21} & \sigma_{22}
\end{array}\right)
\end{equation}
with $\sigma_{ij}^{2}\in\{0,1\},\sigma_{i1}^{2}+\sigma_{i2}^{2}=1,\sigma_{1j}^{2}+\sigma_{1j}^{2}=1.$
The Cauchy form is obtained from the matrix form by left multiplying
by the line vector $(1,2):$
\begin{equation}
\sigma=\left(\begin{array}{cc}
1 & 2\end{array}\right)\cdot\left(\begin{array}{cc}
\sigma_{11} & \sigma_{12}\\
\sigma_{21} & \sigma_{22}
\end{array}\right)=(\sigma_{11}+2\sigma_{21},\sigma_{12}+2\sigma_{22}).
\end{equation}

\subsection{Hyperbolic symmetries}

The following theorem gives the symmetries of the hyperbola. 
\begin{prop}
\label{prop:gamma}Take a point $\mathbf{x}=(x_{1},x_{2})$ such 
\begin{equation}
f(\mathbf{q},\mathbf{x})\overset{(\ref{eq:fqdex})}{=}q_{0}+q_{1}x_{1}+q_{2}x_{2}+q_{3}x_{1}^{2}+q_{4}x_{1}x_{2}+q_{5}x_{2}^{2}=0
\end{equation}
and a symmetry 
\begin{equation}
\sigma=(\sigma_{11}+2\sigma_{21},\sigma_{12}+2\sigma_{22})\in B_{2}.
\end{equation}
Define

\begin{equation}
\begin{array}{ccc}
\gamma & = & (q_{0},\sigma_{11}q_{1}+\sigma_{21}q_{2},\sigma_{12}q_{1}+\sigma_{22}q_{2},\sigma_{11}^{2}q_{3}\\
 &  & +\sigma_{21}^{2}q_{5},\sigma_{11}\sigma_{22}+\sigma_{12}\sigma_{21})q_{4},\sigma_{12}^{2}q_{3}+\sigma_{22}^{2}q_{5})
\end{array}
\end{equation}
The pair $(\sigma^{-1},\gamma)$ is conjugate with respect to $f(\mathbf{q},\mathbf{x})$.
\end{prop}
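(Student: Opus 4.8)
The plan is to reduce the claim to a single substitution identity using the matrix form of $f$ already introduced in the excerpt. Write $f(\mathbf{q},\mathbf{x})=\mathbf{x}^{\mathrm{T}}\mathbf{Q}\,\mathbf{x}+\mathbf{b}^{\mathrm{T}}\mathbf{x}+q_{0}$, with $\mathbf{Q}$ as in the text and $\mathbf{b}=(q_{1},q_{2})^{\mathrm{T}}$, and regard $\sigma$ as the orthogonal matrix with entries $\sigma_{ij}$. First I would substitute $\mathbf{x}=\sigma\mathbf{y}$, which yields
\[
f(\mathbf{q},\sigma\mathbf{y})=\mathbf{y}^{\mathrm{T}}\bigl(\sigma^{\mathrm{T}}\mathbf{Q}\,\sigma\bigr)\mathbf{y}+\bigl(\sigma^{\mathrm{T}}\mathbf{b}\bigr)^{\mathrm{T}}\mathbf{y}+q_{0}.
\]
The right-hand side is again a quadratic of the form $f(\,\cdot\,,\mathbf{y})$, whose parameter vector I would read off from the entries of $\sigma^{\mathrm{T}}\mathbf{Q}\,\sigma$ and $\sigma^{\mathrm{T}}\mathbf{b}$. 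The goal is to show this parameter vector is exactly the $\gamma$ of the statement, i.e. that $f(\gamma(\mathbf{q}),\mathbf{y})=f(\mathbf{q},\sigma\mathbf{y})$ holds identically in $\mathbf{y}$.

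The linear part $\sigma^{\mathrm{T}}\mathbf{b}$ is immediate and gives the components $\sigma_{11}q_{1}+\sigma_{21}q_{2}$ and $\sigma_{12}q_{1}+\sigma_{22}q_{2}$, matching $\gamma_{1},\gamma_{2}$. For the quadratic part I would expand $\sigma^{\mathrm{T}}\mathbf{Q}\,\sigma$ entrywise, obtaining diagonal entries $\sigma_{11}^{2}q_{3}+\sigma_{11}\sigma_{21}q_{4}+\sigma_{21}^{2}q_{5}$ and $\sigma_{12}^{2}q_{3}+\sigma_{12}\sigma_{22}q_{4}+\sigma_{22}^{2}q_{5}$, and off-diagonal entry $\sigma_{11}\sigma_{12}q_{3}+\tfrac{1}{2}(\sigma_{11}\sigma_{22}+\sigma_{12}\sigma_{21})q_{4}+\sigma_{21}\sigma_{22}q_{5}$. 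This is the only step that carries any content: because $\sigma\in B_{2}$, each row and each column of $\sigma$ has a single nonzero entry, so every within-a-row and within-a-column product vanishes, namely $\sigma_{11}\sigma_{21}=\sigma_{12}\sigma_{22}=\sigma_{11}\sigma_{12}=\sigma_{21}\sigma_{22}=0$. Substituting these relations collapses the diagonal entries to $\sigma_{11}^{2}q_{3}+\sigma_{21}^{2}q_{5}$ and $\sigma_{12}^{2}q_{3}+\sigma_{22}^{2}q_{5}$ (matching $\gamma_{3},\gamma_{5}$) and the off-diagonal entry to $\tfrac{1}{2}(\sigma_{11}\sigma_{22}+\sigma_{12}\sigma_{21})q_{4}$, so that the coefficient of $y_{1}y_{2}$ is $(\sigma_{11}\sigma_{22}+\sigma_{12}\sigma_{21})q_{4}$, matching $\gamma_{4}$; the constant term is plainly $q_{0}=\gamma_{0}$. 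This establishes $f(\gamma(\mathbf{q}),\mathbf{y})=f(\mathbf{q},\sigma\mathbf{y})$ for all $\mathbf{y}$.

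Finally I would close the argument by the change of variable $\mathbf{y}=\sigma^{-1}(\mathbf{x})$. Since the identity above holds for every $\mathbf{y}$, it holds in particular at $\mathbf{y}=\sigma^{-1}(\mathbf{x})$, giving $f(\gamma(\mathbf{q}),\sigma^{-1}(\mathbf{x}))=f(\mathbf{q},\sigma\sigma^{-1}\mathbf{x})=f(\mathbf{q},\mathbf{x})$ identically in $\mathbf{x}$. In particular the two sides vanish simultaneously, which is exactly the conjugacy condition \eqref{eq:conjugate} for the pair $(\sigma^{-1},\gamma)$. The main obstacle, such as it is, is purely organizational: keeping track of the six coefficients and recognizing that the $B_{2}$ constraints are precisely what eliminate the unwanted cross-terms $\sigma_{11}\sigma_{21}q_{4}$, $\sigma_{12}\sigma_{22}q_{4}$, $\sigma_{11}\sigma_{12}q_{3}$ and $\sigma_{21}\sigma_{22}q_{5}$; no genuine difficulty arises beyond this bookkeeping.
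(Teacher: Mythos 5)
Your proof is correct and follows essentially the same route as the paper: both substitute $\mathbf{x}=\sigma\mathbf{y}$, read off the transformed coefficients, identify them with $\gamma(\mathbf{q})$, and conclude via $\mathbf{y}=\sigma^{-1}(\mathbf{x})$; your matrix packaging $\sigma^{\mathrm{T}}\mathbf{Q}\,\sigma$, $\sigma^{\mathrm{T}}\mathbf{b}$ is just a notational variant of the paper's scalar expansion. If anything, yours is slightly more complete, since the paper's expansion silently drops the cross-terms $\sigma_{11}\sigma_{21}q_{4}$, $\sigma_{12}\sigma_{22}q_{4}$, $2\sigma_{11}\sigma_{12}q_{3}$, $2\sigma_{21}\sigma_{22}q_{5}$ whereas you state explicitly that the $B_{2}$ row/column structure is what makes them vanish.
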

\begin{proof}
Define 
\begin{equation}
\begin{array}{ccc}
x_{1} & = & \sigma_{11}\cdot y_{1}+\sigma_{12}\cdot y_{2}\\
x_{2} & = & \sigma_{21}\cdot y_{1}+\sigma_{22}\cdot y_{2}
\end{array}
\end{equation}
We have
\[
\begin{array}{ccl}
f(\mathbf{q},\mathbf{x}) & = & q_{0}+q_{1}x_{1}+q_{2}x_{2}+q_{3}x_{1}^{2}+q_{4}x_{1}x_{2}+q_{5}x_{2}^{2}\\
 & = & q_{0}+q_{1}(\sigma_{11}y_{1}+\sigma_{12}y_{2})+q_{2}(\sigma_{21}y_{1}+\sigma_{22}y_{2})+q_{3}(\sigma_{11}y_{1}+\sigma_{12}y_{2})^{2}\\
 &  & +q_{4}(\sigma_{11}y_{1}+\sigma_{12}y_{2})(\sigma_{21}y_{1}+\sigma_{22}y_{2})+q_{5}(\sigma_{21}y_{1}+\sigma_{22}y_{2})^{2}\\
 & = & q_{0}+\left(\sigma_{11}q_{1}+\sigma_{21}q_{2}\right)y_{1}+\left(\sigma_{12}q_{1}+\sigma_{22}q_{2}\right)y_{2}+(\sigma_{11}^{2}q_{3}+\sigma_{21}^{2}q_{5})y_{1}^{2}\\
 &  & +(\sigma_{11}\sigma_{22}+\sigma_{12}\sigma_{21})q_{4}y_{1}y_{2}+(\sigma_{12}^{2}q_{3}+\sigma_{22}^{2}q_{5})y_{2}^{2}
\end{array}
\]
Thus
\begin{equation}
\begin{array}{cc}
 & f(\mathbf{q},\mathbf{x})=0\\
\Leftrightarrow & f(\gamma(\mathbf{q}),\mathbf{y})=0\\
\Leftrightarrow & f(\gamma(\mathbf{q}),\sigma^{-1}(\mathbf{x}))=0.
\end{array}
\end{equation}
\end{proof}

\subsection{Choice function}

Considering Proposition \ref{prop:gamma}, we get the choice function
$\psi$ \citep{jaulin:quotient:2023}:
\begin{equation}
\begin{array}{ccl}
\psi_{\sigma}(\mathbf{q}) & = & (q_{0},\alpha_{11}q_{1}+\alpha_{21}q_{2},\alpha_{12}q_{1}+\alpha_{22}q_{2},\\
 &  & \alpha_{11}^{2}q_{3}+\alpha_{21}^{2}q_{5},\alpha_{11}\alpha_{22}+\alpha_{12}\alpha_{21})q_{4},\alpha_{12}^{2}q_{3}+\alpha_{22}^{2}q_{5})
\end{array}\label{eq:psi}
\end{equation}
where $\alpha=\sigma^{-1}$. Given a symmetry $\sigma$, this choice
function allows us to get a symmetry $\gamma$ such that $(\sigma,\gamma)$
is a conjugate pair.

\section{Cardinal functions\label{sec:Cardinal-functions}}

For a given $\mathbf{q}$, the solution set of the equation $f(\mathbf{q},\mathbf{x})=0$
(hyperbola or not) can be decomposed into functions partially defined.
A possible decomposition which works for the hyperbola is based on
cardinal functions to be introduced in this section. 

\subsection{Some definitions}
\begin{defn}
A cardinal vector of $\mathbb{R}^{n}$ is a vector
\begin{equation}
\mathbf{e}=(e_{1},\dots,e_{n})^{\text{T}}
\end{equation}
such that $\|\mathbf{e}\|=1$ and $e_{i}\in\{-1,0,1\}$. 
\end{defn}
For instance $\mathbf{e}_{3}=(0,0,1,0)^{\text{T}}$ and $\mathbf{e}{}_{-2}=(0,-1,0,0)$
are two cardinal vectors of $\mathbb{R}^{4}$. We use the notation
$\mathbf{e}_{i}$ where $i\in I=\{-n,\dots,-1,1,\dots,n\}$ to specify
the cardinal vector. For instance $\mathbf{e}{}_{-2}$ is the vector
parallel to the $2$ axis with a negative direction.
\begin{defn}
Given a closed set $\mathbb{X}$ of $\mathbb{R}^{n}$. A cardinal
function $\varphi_{i}$ with $i\in\{-n,\dots,-1,1,\dots,n\}$ is defined
by 
\begin{equation}
\begin{array}{l}
\varphi_{i}(x_{1},\dots,x_{|i|-1},x_{|i|+1},\dots,x_{n})\\
=\,\,\max\left\{ \mathbf{x}^{\text{T}}\cdot\mathbf{e}_{i}\,|\,\mathbf{x}=(x_{1},\dots,x_{|i|-1},x_{i},x_{|i|+1},\dots,x_{n})\in\mathbb{X}\right\} 
\end{array}
\end{equation}
\end{defn}
Figure \ref{fig:cardfunction1} shows in case of $n=2$, a representation
of the functions $\varphi_{1}(x_{2})$ (red) and $\varphi_{-1}(x_{2})$
(blue). The small squares correspond to cardinal points (East in red
and West in blue). Here, we have two Easts and two Wests.

\begin{figure}[H]
\begin{centering}
\includegraphics[width=8cm]{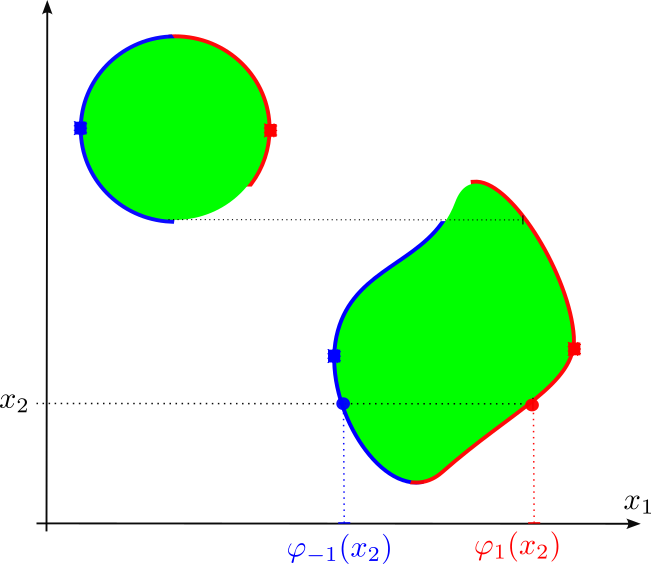}
\par\end{centering}
\caption{Graphs of the functions $\varphi_{1}(x_{2})$ (red) $\varphi_{-1}(x_{2})$
(blue) }
\label{fig:cardfunction1}
\end{figure}

Figure \ref{fig:cardfunction2} is a representation of the functions
$\varphi_{2}(x_{1})$ (black) and $\varphi_{-2}(x_{1})$ (orange).
The small squares correspond to cardinal points (North in black and
South in orange).

\begin{figure}[H]
\begin{centering}
\includegraphics[width=8cm]{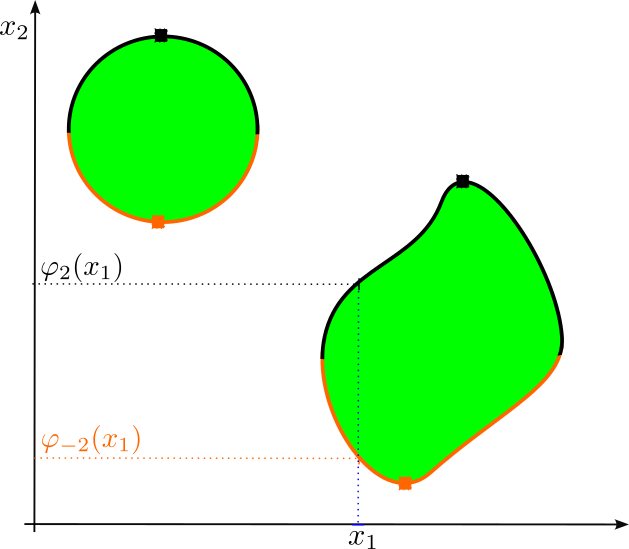}
\par\end{centering}
\caption{Graphs of the functions $\varphi_{2}(x_{1})$ (black) $\varphi_{-1}(x_{2})$
(orange)}
\label{fig:cardfunction2}
\end{figure}

In Figure \ref{fig:cardfunction1}, we observe that graphs of the
function $\varphi_{1}$ and $\varphi_{-1}$ do not cover the boundary
of \noun{$\mathbb{X}$}. This is due to the fact that $\mathbb{X}$
is not row convex. We define the notion of row convexity (similar
to the definition in \citep{Sam-Haroud96})
\begin{defn}
A set $\mathbb{X}\subset\mathbb{R}^{n}$ is said to be \emph{row convex}
if the boundary $\partial\mathbb{X}$ of $\mathbb{X}$ corresponds
to the union of the graphs of its cardinal functions, \emph{i.e.},
\begin{equation}
\partial\mathbb{X}=\cup_{i}\text{graph}(\varphi_{i}).
\end{equation}
\end{defn}

\subsection{Case of the hyperbola}

For the hyperbola defined by
\begin{equation}
f(\mathbf{q},\mathbf{x})=q_{0}+q_{1}x_{1}+q_{2}x_{2}+q_{3}x_{1}^{2}+q_{4}x_{1}x_{2}+q_{5}x_{2}^{2}=0.
\end{equation}
We have four cardinal functions $\varphi_{i},i\in\{-2,-1,1,2\}.$
As it will be shown, the hyperbola is row convex and its cardinal
functions will be sufficient to completely represent the equation.

To find $\varphi_{1}$, we fix $x_{2}$ and we search for the maximal
value for $x_{1}$. The procedure leads to the following proposition.
Other cardinal functions will be obtained by symmetries. 
\begin{prop}
\label{prop:phiq-1}Take a point $x=(x_{1},x_{2})$ such that $f(\mathbf{q},\mathbf{x})=0$.
Given $x_{2}$, the largest $x_{1}$ such that $f(\mathbf{x})=0$
is given by
\begin{equation}
\begin{array}{ccl}
x_{1} & = & \varphi_{1}(\mathbf{q},x_{2})\\
 & = & \frac{-(q_{1}+q_{4}x_{2})+\text{sign}(q_{3})\cdot\sqrt{(q_{1}+q_{4}x_{2})^{2}-4q_{1}(q_{0}+q_{2}x_{2}+q_{5}x_{2}^{2})}}{2q_{3}}
\end{array}\label{eq:x1-1}
\end{equation}
\end{prop}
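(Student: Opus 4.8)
The plan is to treat $f(\mathbf{q},\mathbf{x})=0$ as a quadratic equation in the single unknown $x_1$, holding $x_2$ fixed as a parameter. Grouping the terms of \eqref{eq:fqdex} by powers of $x_1$ gives
\begin{equation}
q_3\,x_1^2+(q_1+q_4 x_2)\,x_1+(q_0+q_2 x_2+q_5 x_2^2)=0,
\end{equation}
which is of the form $a x_1^2+b x_1+c=0$ with $a=q_3$, $b=q_1+q_4 x_2$, and $c=q_0+q_2 x_2+q_5 x_2^2$. Applying the standard quadratic formula yields the two roots
\begin{equation}
x_1=\frac{-(q_1+q_4 x_2)\pm\sqrt{(q_1+q_4 x_2)^2-4q_3(q_0+q_2 x_2+q_5 x_2^2)}}{2q_3},
\end{equation}
and the content of the proposition is that the larger root is selected by the choice of sign given in \eqref{eq:x1-1}.

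First I would carry out the grouping above explicitly and identify $a,b,c$, then write down both roots. The remaining work is to justify why $\mathrm{sign}(q_3)$ is exactly the sign that picks out the \emph{maximal} $x_1$, rather than simply using $+$. The key observation is that the two roots are ordered according to the sign of the leading coefficient: when $q_3>0$ the square-root term is added to obtain the larger root, while when $q_3<0$ dividing by the negative denominator $2q_3$ reverses the order, so the larger root is instead obtained by adding the radical in the numerator as well once the sign is absorbed. Writing $x_1=\frac{-b+\varepsilon\sqrt{b^2-4ac}}{2q_3}$ with $\varepsilon=\pm1$, the value is maximized when $\varepsilon\sqrt{b^2-4ac}/q_3$ is maximized; since the radical is nonnegative, this forces $\varepsilon=\mathrm{sign}(q_3)$, which is precisely the claimed formula.

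I would note that $q_3\neq0$ is implicitly required for the quadratic in $x_1$ to be nondegenerate, and I would point out that the hyperbola assumption $\det\mathbf{Q}<0$ guarantees the relevant discriminant behavior so that a real largest root exists for the values of $x_2$ lying in the domain of $\varphi_1$. The one genuinely substantive step — and the only place where care is needed — is the sign analysis in the previous paragraph; everything else is the routine solution of a quadratic. I expect the main (minor) obstacle to be stating the sign argument cleanly, since it hinges on how division by the possibly-negative $2q_3$ interacts with the $\pm$ in front of the radical, and on being explicit that the radical is taken nonnegative.
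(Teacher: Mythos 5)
Your proposal is correct and follows essentially the same route as the paper: group $f(\mathbf{q},\mathbf{x})=0$ as a quadratic in $x_{1}$ with $a_{1}=q_{3}$, $b_{1}=q_{1}+q_{4}x_{2}$, $c_{1}=q_{0}+q_{2}x_{2}+q_{5}x_{2}^{2}$ and take the root $\frac{-b_{1}+\text{sign}(q_{3})\sqrt{\Delta_{1}}}{2q_{3}}$. You additionally spell out why $\text{sign}(q_{3})$ selects the larger root (the paper asserts this without argument), and your discriminant correctly reads $4q_{3}(q_{0}+q_{2}x_{2}+q_{5}x_{2}^{2})$, whereas the displayed formula (\ref{eq:x1-1}) contains a typo with $4q_{1}$ in place of $4q_{3}$.
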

\begin{proof}
Given $x_{2}$, let us compute the largest possible value for $x_{1}$.
Since
\begin{equation}
f(\mathbf{q},\mathbf{x})=q_{3}x_{1}^{2}+\left(q_{1}+q_{4}x_{2}\right)x_{1}+q_{2}x_{2}+q_{0}+q_{5}x_{2}^{2},
\end{equation}
we get the following discriminant:
\begin{equation}
\Delta_{1}=b_{1}^{2}-4a_{1}c_{1}\label{eq:Delta1}
\end{equation}
where
\begin{equation}
a_{1}=q_{3},\,b_{1}=q_{1}+q_{4}x_{2},\,c_{1}=q_{0}+q_{2}x_{2}+q_{5}x_{2}^{2}
\end{equation}
The largest solution is
\begin{equation}
x_{1}=\frac{-b_{1}+\text{sign}(a_{1})\cdot\sqrt{\Delta_{1}}}{2a_{1}}.
\end{equation}
which corresponds to (\ref{eq:x1-1}).
\end{proof}
\begin{defn}
The cardinal points are the $(x_{1},x_{2})$ which belong to the graph
of at least three cardinal functions $\varphi_{i}$, $i\in\{-2,-1,1,2\}$. 
\end{defn}
For instance a North belongs to the graphs of $\varphi_{1},\varphi_{-1},\varphi_{2}$
and a East belongs to the graphs of $\varphi_{2},\varphi_{-2},\varphi_{1}$.
For our hyperbola we easily find that there exist four cardinal points.
Of course, the cardinal points depend on $\mathbf{q}$.
\begin{prop}
\label{prop:rho}Consider the hyperbola $f(\mathbf{q},\mathbf{x})=0$.
Define the interval function
\begin{equation}
\rho(\mathbf{q})=\frac{-2q_{3}q_{2}+q_{1}q_{4}+[-1,1]\cdot\sqrt{(2q_{3}q_{2}-q_{1}q_{4})^{2}-(4q_{3}q_{5}-q_{4}^{2})(4q_{3}q_{0}-q_{1}^{2})}}{4q_{3}q_{5}-q_{4}^{2}}\label{eq:rho:q}
\end{equation}
If we set $[x_{2}]=[x_{2}^{-},x_{2}^{+}]=\rho(\mathbf{q})$, then
the North of the hyperbola is $(\varphi_{1}(x_{2}^{-}),x_{2}^{-})$
and the South is $(\varphi_{1}(x_{2}^{+}),x_{2}^{+})$. 
\end{prop}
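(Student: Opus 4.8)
The plan is to find the cardinal points by characterizing the North and South of the hyperbola as the extreme points in the $x_2$-direction. A North is a point belonging to the graphs of $\varphi_1,\varphi_{-1},\varphi_2$; geometrically this means it is a point where the two $x_1$-branches (given by $\varphi_1$ and $\varphi_{-1}$) meet, i.e.\ where the discriminant $\Delta_1$ from Proposition~\ref{prop:phiq-1} vanishes, and simultaneously it is the highest such point. So the key observation is that the cardinal points in the $x_2$-direction are exactly the values of $x_2$ for which the quadratic equation in $x_1$ has a double root, together with the resulting unique $x_1$.

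First I would set $\Delta_1 = b_1^2 - 4a_1c_1 = 0$ using the coefficients from Proposition~\ref{prop:phiq-1}, namely $a_1=q_3$, $b_1 = q_1 + q_4 x_2$ and $c_1 = q_0 + q_2 x_2 + q_5 x_2^2$. Expanding gives
\begin{equation}
(q_1+q_4 x_2)^2 - 4q_3(q_0 + q_2 x_2 + q_5 x_2^2) = 0,
\end{equation}
which is a quadratic equation in $x_2$. Collecting terms, the coefficient of $x_2^2$ is $q_4^2 - 4q_3 q_5$, the coefficient of $x_2$ is $2q_1 q_4 - 4q_3 q_2$, and the constant term is $q_1^2 - 4q_3 q_0$. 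Solving this quadratic by the usual formula yields exactly the two roots packaged in the interval expression $\rho(\mathbf{q})$ of~(\ref{eq:rho:q}); I would simply verify that dividing numerator and denominator by $-1$ (to match the sign convention $4q_3q_5 - q_4^2$ in the denominator) reproduces~(\ref{eq:rho:q}), with the $[-1,1]$ factor encoding the two choices of sign before the square root.

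Having obtained the two critical values $x_2^-$ and $x_2^+ = \rho(\mathbf{q})$, I would then argue that at such an $x_2$ the value of $x_1$ is the double root, so $\varphi_1(x_2) = \varphi_{-1}(x_2) = -b_1/(2a_1)$, confirming that $(\varphi_1(x_2^{\pm}), x_2^{\pm})$ lies on all three relevant cardinal graphs and is therefore a cardinal point (a North or a South). To distinguish North from South I would invoke the orientation of the $x_2$-axis: the North is the one with the smaller $x_2$ and the South the one with the larger, so the assignment of $x_2^-$ to the North and $x_2^+$ to the South follows from the ordering of the interval $[x_2^-, x_2^+]$. (Care with the sign of the leading coefficient $q_4^2 - 4q_3 q_5$ may be needed here, but since we have assumed $\det\mathbf{Q}<0$, i.e.\ $4q_3q_5 - q_4^2 < 0$, the denominator in~(\ref{eq:rho:q}) is negative and the geometry is fixed.)

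The main obstacle I anticipate is purely bookkeeping: matching the signs in the expanded discriminant with the precise form of~(\ref{eq:rho:q}), since the standard quadratic formula produces $-B \pm \sqrt{B^2 - 4AC}$ over $2A$ with $A = q_4^2 - 4q_3q_5$, whereas~(\ref{eq:rho:q}) is written with denominator $4q_3q_5 - q_4^2 = -2A$ and numerator $-2q_3q_2 + q_1 q_4 = -\tfrac12 B$. Verifying that these rescalings are consistent (and that the discriminant inside the root, $(2q_3q_2 - q_1q_4)^2 - (4q_3q_5 - q_4^2)(4q_3q_0 - q_1^2)$, equals $\tfrac14$ of the discriminant of the $x_2$-quadratic) is the one calculation that must be done carefully; everything else is the conceptual identification of cardinal points with double roots of the sliced quadratic.
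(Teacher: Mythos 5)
Your proposal is correct and follows essentially the same route as the paper: the paper also reduces to the quadratic $(4q_{3}q_{5}-q_{4}^{2})x_{2}^{2}+(4q_{3}q_{2}-2q_{1}q_{4})x_{2}+4q_{3}q_{0}-q_{1}^{2}$ obtained from the discriminant $\Delta_{1}$ of the $x_{1}$-equation (phrased as the feasibility condition $\Delta_{1}\geq0$ rather than your double-root condition $\Delta_{1}=0$, but with the same roots), solves it, and assigns North to the smaller root and South to the larger. Your added remark that at these roots $\varphi_{1}=\varphi_{-1}=-b_{1}/(2a_{1})$, which certifies that the points are genuinely cardinal points, is a small justification the paper omits.
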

Note that if the square root is not defined, then there is no cardinal
points.
\begin{proof}
A value for $x_{2}$ yields a feasible $x_{1}$ if $\Delta_{1}\geq0$
(see (\ref{fig:cardfunction2})), \emph{i.e.},
\[
\begin{array}{clc}
 & b_{1}^{2}-4a_{1}c_{1} & \geq0\\
\Leftrightarrow & -(q_{1}+q_{4}x_{2})^{2}+4q_{3}(q_{0}+q_{2}x_{2}+q_{5}x_{2}^{2}) & \geq0\\
\Leftrightarrow & (4q_{3}q_{5}-q_{4}^{2})x_{2}^{2}+(4q_{3}q_{2}-2q_{1}q_{4})x_{2}+4q_{3}q_{0}-q_{1}^{2} & \leq0
\end{array}
\]
which is quadratic in $x_{2}.$ The discriminant is
\begin{equation}
\Delta_{2}=b_{2}^{2}-4a_{2}c_{2}
\end{equation}
where
\begin{equation}
\begin{array}{ccc}
a_{2} & = & 4q_{3}q_{5}-q_{4}^{2}\\
b_{2} & = & 4q_{3}q_{2}-2q_{1}q_{4}\\
c_{2} & = & 4q_{3}q_{0}-q_{1}^{2}
\end{array}
\end{equation}
The corresponding values for $x_{2}$ is 
\[
x_{2}=\frac{-b_{2}\pm\sqrt{(\Delta_{2}}}{2a_{2}}.
\]
and the North corresponds to the smallest one and the South to the
largest.
\end{proof}
\textbf{Corollary. }Consider the hyperbola $f(\mathbf{q},\mathbf{x})=0$.
Take the symmetry $\sigma=(1,3,2,6,5,4)$ and set $[x_{1}]=\rho(\sigma(\mathbf{q})$),
the East of the hyperbola is $(x_{1}^{-},\varphi_{2}(x_{1}^{-}))$
and the West is $(x_{1}^{+},\varphi_{2}(x_{1}^{+}))$.
\begin{proof}
The symmetry $\sigma$ permutes $x_{1}$ and $x_{2}$. Then, we apply
Proposition \ref{prop:rho}. The East becomes the North and the West
becomes the South.
\end{proof}

\section{Separator for the hyperbola\label{sec:Separator}}

\subsection{Interval extension of the cardinal function }

Let us assume that $\mathbf{q}$ is fixed. The dependency with respect
to the parameter vector $\mathbf{q}$ will omitted for simplicity.
As defined in the book of Moore \citep{Moore79}, the interval extension
function of $\varphi_{1}(x_{2})$ is
\[
[\varphi_{1}]([x_{2}])=\left[\{x_{1}|\exists x_{2}\in[x_{2}],x_{1}=\varphi_{1}(x_{2})\}\right]
\]
which returns the smallest interval which contains the set $\varphi_{1}([x_{2}])$.
The same definition applies for other cardinal directions to get $[\varphi_{-1}]([x_{2}]),[\varphi_{2}]([x_{1}])$
and $[\varphi_{-2}]([x_{1}])$.

Due to the monotonicity of $\varphi_{1}$ between the cardinal points,
we have 
\[
[\varphi_{1}]([x_{2}])=[\varphi_{1}(\{x_{2}^{-},x_{2}^{+},c_{2}(1),c_{2}(2),\dots\})]
\]
where $\mathbf{c}(1),\mathbf{c}(2),\dots$ are the cardinal points
inside the box $[-\infty,\infty]\times[x_{2}]$.

Take for instance $\mathbf{q}=(-1,5,2,-2,30,-2)$, \emph{i.e.},
\[
f(x_{1},x_{2})=-1+5x_{1}+2x_{2}-2x_{1}^{2}+30x_{1}x_{2}-2x_{2}^{2}.
\]
For an interval sampling $[x_{2}]=\frac{1}{5}\cdot[k,k+1]$, $k\in\mathbb{N}$,
the function $[\varphi_{1}]([x_{2}])$ generates the red boxes of
Figure \ref{fig:Phi1}. If we do the same for $[\varphi_{-1}]([x_{2}])$,
we get the blue boxes. The small black square corresponds to the North
and the small orange square corresponds to the South.

\begin{figure}[H]
\begin{centering}
\includegraphics[width=8cm]{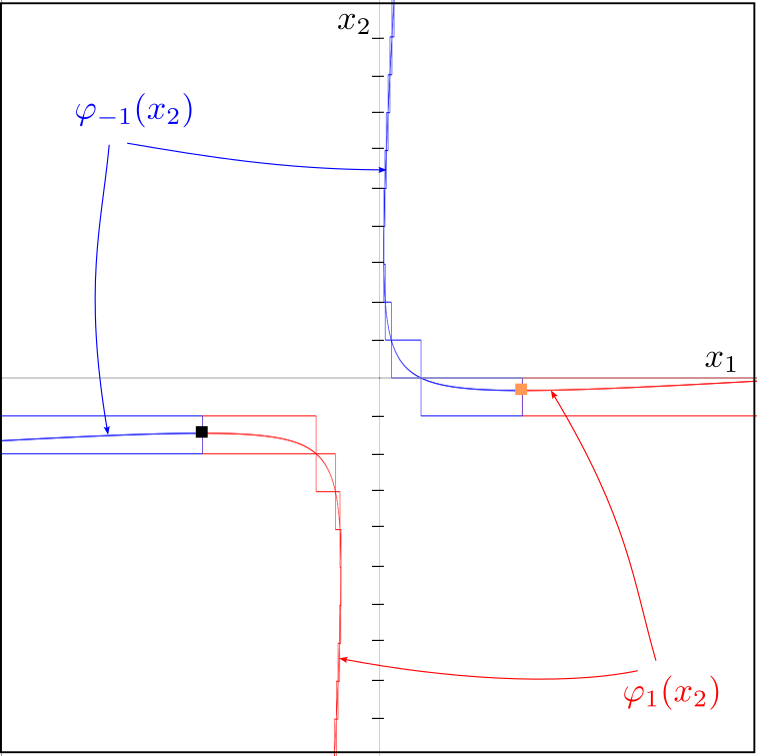}
\par\end{centering}
\caption{Minimal interval extension for $\varphi_{1}([x_{2}])$ (red) and $\varphi_{-1}([x_{2}])$
(blue). The frame box is $[-2,2]^{2}$ }
\label{fig:Phi1}
\end{figure}

For a similar sampling along $x_{1}$, Figure \ref{fig:Phi2} represents
$[\varphi_{2}]([x_{1}])$ (black) and $[\varphi_{-2}]([x_{1}])$ (orange).
The small blue square corresponds to the East and the small red square
corresponds to the West. Note that here, the West is on the left of
the East. This is never the case for an ellipse.

\begin{figure}[H]
\begin{centering}
\includegraphics[width=8cm]{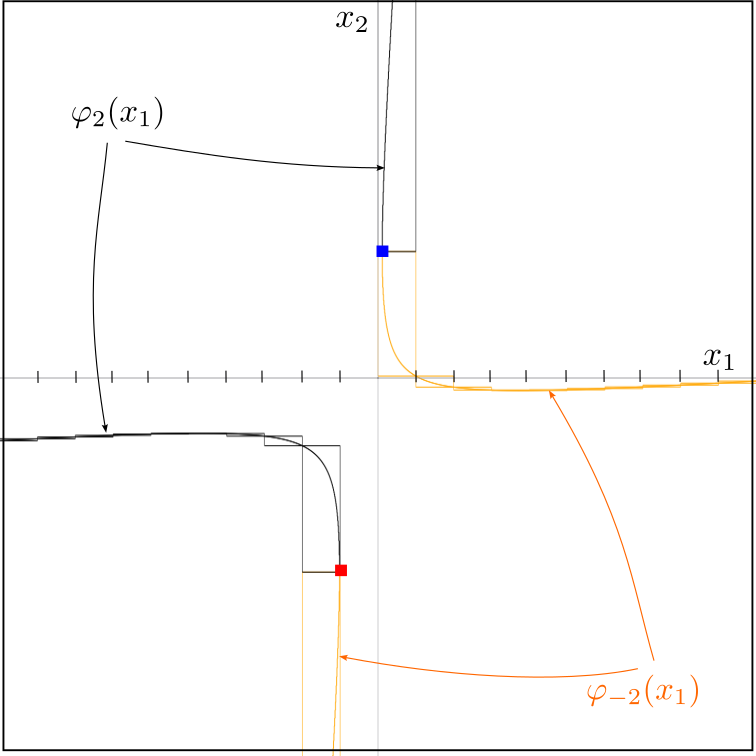}
\par\end{centering}
\caption{Minimal inclusion for $\varphi_{2}([x_{1}])$ (black) and $\varphi_{-2}([x_{1}])$
(orange)}
\label{fig:Phi2}
\end{figure}

\subsection{Seed contractor}

From the interval evaluation, we can build a contractor for the set
$x_{1}=\varphi_{1}(x_{2})$. It is is given by

\begin{equation}
C_{0}:[\mathbf{x}]\rightarrow[x_{1}]\times[\varphi_{1}]([x_{2}]).\label{eq:seed}
\end{equation}
This contractor will be called a \emph{seed contractor} because it
will be used to construct all other contractors using symmetries.
The contractor (\ref{eq:seed}) is not minimal. It is only minimal
with respect to $x_{1}.$ Since this contractor depends on $\mathbf{q}$,
we will write $C_{0}^{\mathbf{q}}.$

We understand that $C_{0}^{\mathbf{q}}$ corresponds to a small portion
of the hyperbola. The main challenge is now to build the the separator
for the whole hyperbola using the single parametric contractor $C_{0}^{\mathbf{q}}$
and symmetries. Of course, we could add some other seed contractors,
but our idea is to factorize the implementation as much as possible
to avoid bugs and make the code adaptable to other types of sets. 

\subsection{Contractor for the hyperbola}

We have a contractor $C_{0}^{\mathbf{q}}$ which is minimal in the
direction of $x_{2}$. Recall that $C_{0}^{\mathbf{q}}([\mathbf{x}])$
contracts the box $[\mathbf{x}]$ with respect to a small portion
of the hyperbola. Using the notion of contractor action \citep{jaulin:IA:hyperoctohedral},
we show how we can extend this contractor $C_{0}^{\mathbf{q}}$ to
other portions. We recall that the action of a symmetry $\sigma$
to the contractor $C$ is defined by
\[
\sigma\bullet C([\mathbf{x}])=\sigma\circ C\circ\sigma^{-1}([\mathbf{x}]).
\]
This means that $\sigma\bullet C$ is a contractor that has been built
from the contractor $C$ as follows:
\begin{itemize}
\item Apply to the box $[\mathbf{x}]$ the symmetry $\sigma^{-1}$
\item Apply the contractor $C$
\item Apply to the resulting box $C\circ\sigma^{-1}([\mathbf{x}])$ the
symmetry $\sigma$.
\end{itemize}
For the hyperbola, we can make a partition of the curve into four
portions : 
\begin{itemize}
\item North-East : $\mathbb{X}^{(1,2)}=\{(x_{1},x_{2}|x_{1}=\varphi_{1}(x_{2})\text{ and \ensuremath{x_{2}=\varphi_{2}(x_{1})}}\}$
\item North-West : $\mathbb{X}^{(1,-2)}=\{(x_{1},x_{2}|x_{1}=\varphi_{1}(x_{2})\text{ and \ensuremath{x_{2}=\varphi_{-2}(x_{1})}}\}$
\item South-East : $\mathbb{X}^{(-1,2)}=\{(x_{1},x_{2}|x_{1}=\varphi_{-1}(x_{2})\text{ and \ensuremath{x_{2}=\varphi_{2}(x_{1})}}\}$
\item South-West : $\mathbb{X}^{(-1,2)}=\{(x_{1},x_{2}|x_{1}=\varphi_{-1}(x_{2})\text{ and \ensuremath{x_{2}=\varphi_{-2}(x_{1})}}\}$
\end{itemize}
If we consider the pair $(\sigma,\gamma)$ conjugate with respect
to the hyperbola, the contractor $\sigma\bullet C_{0}^{\psi_{\sigma}(\mathbf{q})}$
is associated to another portion of the hyperbola. For a given $\sigma$,
the selection of the symmetries $\gamma$ such that $(\sigma,\gamma)$
is conjugate is made using the choice function (\ref{eq:psi}). These
symmetries can be understood geometrically but can also be computed
automatically as shown in \citep{jaulin:IA:hyperoctohedral}. 

To understand the construction, consider the symmetry $\sigma=(2,1)\in B_{2}$.
The contractor associated to $\mathbb{X}^{(1,2)}$:
\[
C_{1}^{\mathbf{q}}([\mathbf{x}])=\left(\sigma\bullet C_{0}^{\psi_{\sigma}(\mathbf{q})}\cap C_{0}^{\mathbf{q}}\right)([\mathbf{x}]).
\]
It is minimal with respect to both directions $x_{1}$ and $x_{2}$
as illustrated by Figure \ref{fig:C01}. Note that the North-East
portion is delimited by the two cardinal points North (black square)
and East (red square). This is consistent with the fact that $\mathbb{X}^{(1,2)}$
corresponds to the North-East portion.

\begin{figure}[H]
\begin{centering}
\includegraphics[width=9cm]{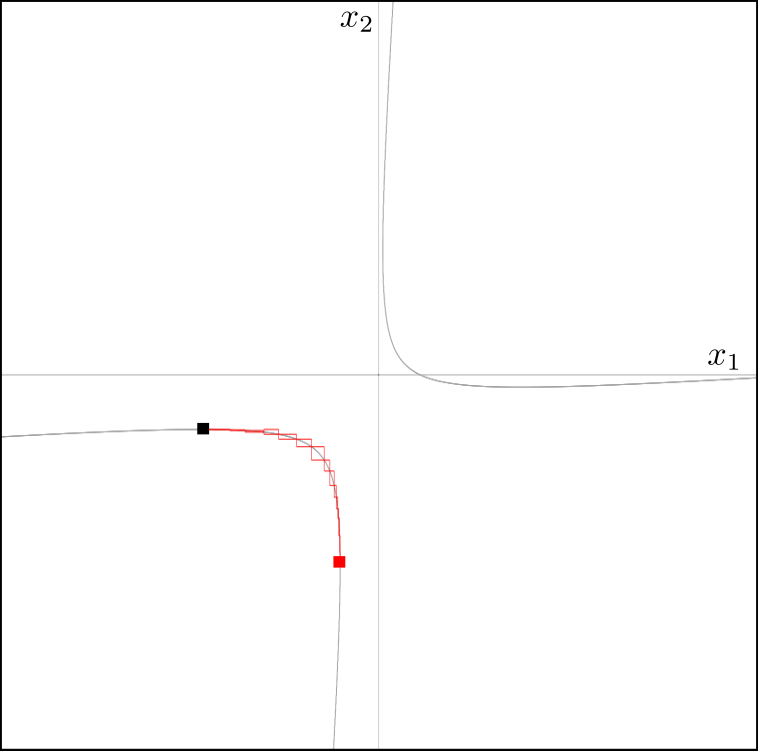}
\par\end{centering}
\caption{Approximation of the North-East portion of the hyperbola using $(2,1)\bullet C_{0}^{\psi_{(2,1)}(\mathbf{q})}\cap C_{0}^{\mathbf{q}}$}
\label{fig:C01}
\end{figure}

The following proposition shows that the contractor for the hyperbola
can be expressed by a simple formula involving symmetries and the
unique seed contractor $C_{0}^{\mathbf{q}}$. Getting such a formula
will ease the implementation of the contractor.
\begin{prop}
\label{prop:bigformula}Consider an hyperbola set $\mathbb{X}$ defined
by $f(\mathbf{q},\mathbf{x})=0$ as given by \ref{eq:fqdex}. A minimal
contractor associated to $\mathbb{X}$ is
\begin{equation}
\bigcup_{^{\sigma\in\{(1,2),(1,-2),(-1,2),(-1,-2)\}}}\sigma\bullet\left((2,1)\bullet C_{0}^{\psi_{(2,1)}\cdot\psi_{\sigma}(\mathbf{q})}\cap C_{0}^{\psi_{\sigma}(\mathbf{q})}\right).\label{eq:ourminimal:ctc}
\end{equation}
where $\psi_{\sigma}(\mathbf{q})$ is the choice function defined
by (\ref{eq:psi}).
\end{prop}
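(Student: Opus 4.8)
The plan is to show that the union in (\ref{eq:ourminimal:ctc}) decomposes exactly into the four boundary portions $\mathbb{X}^{(1,2)}$, $\mathbb{X}^{(1,-2)}$, $\mathbb{X}^{(-1,2)}$, $\mathbb{X}^{(-1,-2)}$, and that each term of the union is a minimal contractor for its corresponding portion. Since a union of minimal contractors whose target sets cover $\mathbb{X}$ is itself a minimal contractor for $\mathbb{X}$, this suffices. First I would establish the base case already discussed in the text: the inner term
\[
C_{1}^{\mathbf{q}}([\mathbf{x}])=\left((2,1)\bullet C_{0}^{\psi_{(2,1)}(\mathbf{q})}\cap C_{0}^{\mathbf{q}}\right)([\mathbf{x}])
\]
is a minimal contractor for the North-East portion $\mathbb{X}^{(1,2)}$. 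The key point is that $C_{0}^{\mathbf{q}}$ is minimal in the $x_{1}$ direction (it realizes $x_{1}=\varphi_{1}(x_{2})$ by Proposition \ref{prop:phiq-1}), while the action of $\sigma=(2,1)$, which swaps the two coordinates, turns the seed contractor into one that is minimal in the $x_{2}$ direction, namely $x_{2}=\varphi_{2}(x_{1})$; by Proposition \ref{prop:gamma} the parameter $\psi_{(2,1)}(\mathbf{q})$ is exactly the one making $((2,1),\psi_{(2,1)})$ conjugate, so the swapped contractor acts on the same hyperbola. Intersecting the two enforces both $x_{1}=\varphi_{1}(x_{2})$ and $x_{2}=\varphi_{2}(x_{1})$ simultaneously, which is precisely the defining condition of $\mathbb{X}^{(1,2)}$, and minimality in each direction is preserved under intersection on this portion.

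Next I would propagate this base case to the other three portions via the outer action. For each $\sigma\in\{(1,2),(1,-2),(-1,2),(-1,-2)\}$, the conjugacy relation of Proposition \ref{prop:gamma} guarantees that applying $\sigma^{-1}$ to a box, running the North-East contractor on the hyperbola with parameter $\psi_{\sigma}(\mathbf{q})$, and applying $\sigma$ back, yields a minimal contractor whose target is $\sigma(\mathbb{X}^{(1,2)}_{\psi_{\sigma}(\mathbf{q})})$. I would verify that these four images are exactly the four named portions of the \emph{original} hyperbola: $(1,2)$ is the identity and fixes the North-East; $(1,-2)$ reflects $x_{2}\mapsto-x_{2}$, sending North-East to North-West; $(-1,2)$ sends it to South-East; and $(-1,-2)$ to South-West. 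The algebraic content here is that $\psi_{\sigma}$ composes correctly, i.e. that $\sigma\bullet C_{0}^{\psi_{(2,1)}\cdot\psi_{\sigma}(\mathbf{q})}$ is the image under $\sigma$ of the $(2,1)$-swapped seed on the $\psi_{\sigma}(\mathbf{q})$ hyperbola, which follows from the functoriality $\sigma\bullet(\tau\bullet C)=(\sigma\circ\tau)\bullet C$ of the contractor action together with the cocycle identity $\psi_{\sigma}\circ\psi_{\tau}=\psi_{\sigma\circ\tau}$ on parameters.

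Finally I would assemble the pieces: the four portions cover $\partial\mathbb{X}$ (the hyperbola is row convex, as noted after Proposition \ref{prop:phiq-1}, so its boundary is the union of the graphs of the four cardinal functions, and every boundary point lies in one of the four North/South $\times$ East/West combinations), each term is minimal for its portion, and the union of minimal contractors for a covering family of subsets is minimal for the whole. The main obstacle I anticipate is the bookkeeping in the second step: one must check carefully that the composed parameter argument $\psi_{(2,1)}\cdot\psi_{\sigma}(\mathbf{q})$ produces the correct hyperbola after both the inner swap and the outer $\sigma$-action, and that the four geometric images genuinely tile the boundary with no overlaps beyond the cardinal points and no gaps. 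Getting the orientation conventions right — in particular that the ``West left of East'' phenomenon peculiar to hyperbolae (remarked after Figure \ref{fig:Phi2}) does not break the identification of $\sigma(\mathbb{X}^{(1,2)})$ with the intended portion — is the delicate part; the rest reduces to the symmetry algebra of $B_{2}$ and the already-proven minimality of the seed.
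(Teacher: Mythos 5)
Your proposal follows essentially the same route as the paper's proof: establish that $(2,1)\bullet C_{0}^{\psi_{(2,1)}(\mathbf{q})}\cap C_{0}^{\mathbf{q}}$ is minimal for the North-East portion, transport it to the other three portions by the actions of $(1,-2)$, $(-1,2)$, $(-1,-2)$ with the choice function supplying the conjugate parameters, and conclude by row convexity (no solution lost) plus the fact that a union of minimal contractors is minimal. Your additional remarks on the composition identity for $\psi$ and the orientation bookkeeping only make explicit what the paper leaves implicit when it substitutes the North-East formula into the union.
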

\begin{proof}
The minimal contractor for the North-East portion $\mathbb{X}^{(1,2)}$
is 
\begin{equation}
C_{1}^{\mathbf{q}}=(2,1)\bullet C_{0}^{\psi_{(2,1)}(\mathbf{q})}\cap C_{0}^{\mathbf{q}}.\label{eq:C1q}
\end{equation}
The three other portions can be defined by applying symmetries in
\[
\{(1,-2),(-1,2),(-1,-2)\}.
\]
Define the contractor 
\[
C=\bigcup_{^{\sigma\in\{(1,2),(1,-2),(-1,2),(-1,-2)\}}}\sigma\bullet C_{1}^{\psi_{\sigma}(\mathbf{q})}
\]
Since, the hyperbola is row convex, $C$ is a contractor for $\mathbf{f}(\mathbf{q},\mathbf{x})=0$
(\emph{i.e.} no solution is lost). Moreover, since the union of contractors
is minimal, $C$is minimal. Combining with (\ref{eq:C1q}), we get
that the minimal contractor with respect to the seed contractor $C_{0}^{\mathbf{q}}$
is given by (\ref{eq:ourminimal:ctc}).
\end{proof}
Figure \ref{fig:C0123} illustrates the minimality of the contractor
for the hyperbola.
\begin{figure}[H]
\begin{centering}
\includegraphics[width=9cm]{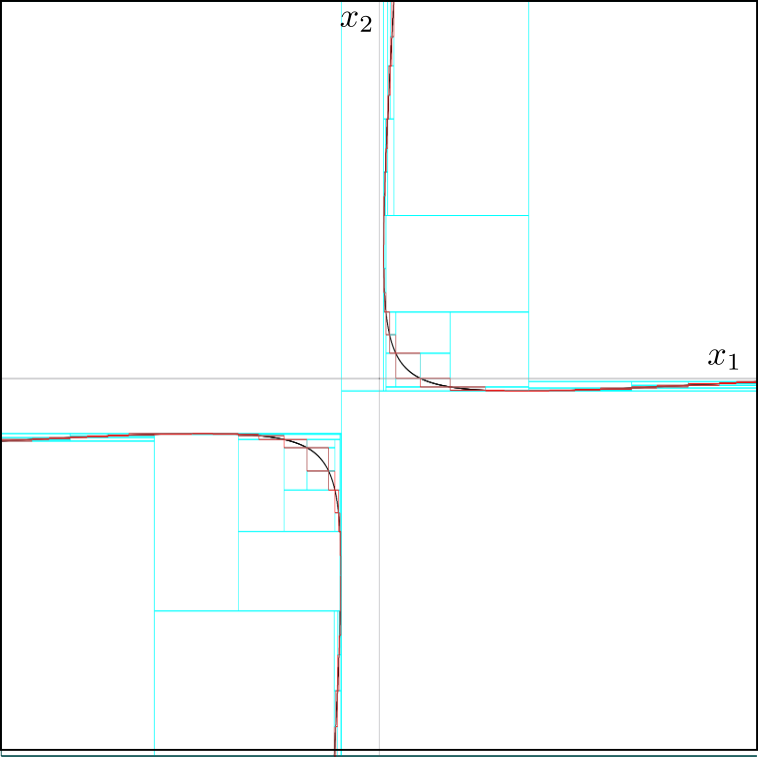}
\par\end{centering}
\caption{Illustration of the minimality of the contractor for the hyperbola}
\label{fig:C0123}
\end{figure}

\subsection{Minimal separator for the hyperbola area}

This section proposes an optimal separator for an hyperbola area defined
by 
\begin{equation}
\mathbb{X}=\{\mathbf{x}|q_{0}+q_{1}x_{1}+q_{2}x_{2}+q_{3}x_{1}^{2}+q_{4}x_{1}x_{2}+q_{5}x_{2}^{2}\leq0\}.
\end{equation}
This separator is then used by a paver to compute boxes that are inside
or outside the solution set.

As shown in \citep{jaulin2023ellipse}, from a contractor on the boundary
of a set $\mathbb{X}$ and a test for $\mathbb{X}$, we can obtain
a separator. As a consequence, we can get an inner and an outer approximations
for $\mathbb{X}$ as illustrated by Figure \ref{fig:siviahyp1} for
$\mathbf{q}=(-1,5,2,-2,30,-2)^{\text{\ensuremath{T}}}.$ The magenta
boxes are proved to be inside $\mathbb{X}$ and the blue boxes are
outside $\mathbb{X}.$ The accuracy is taken as $\varepsilon=0.1$
and corresponds to the size of the small uncertain boxes (yellow).
The cardinal points (North, South, West, East) are represented by
the small squares (black, orange, blue, red). 

\begin{figure}[H]
\begin{centering}
\includegraphics[width=8cm]{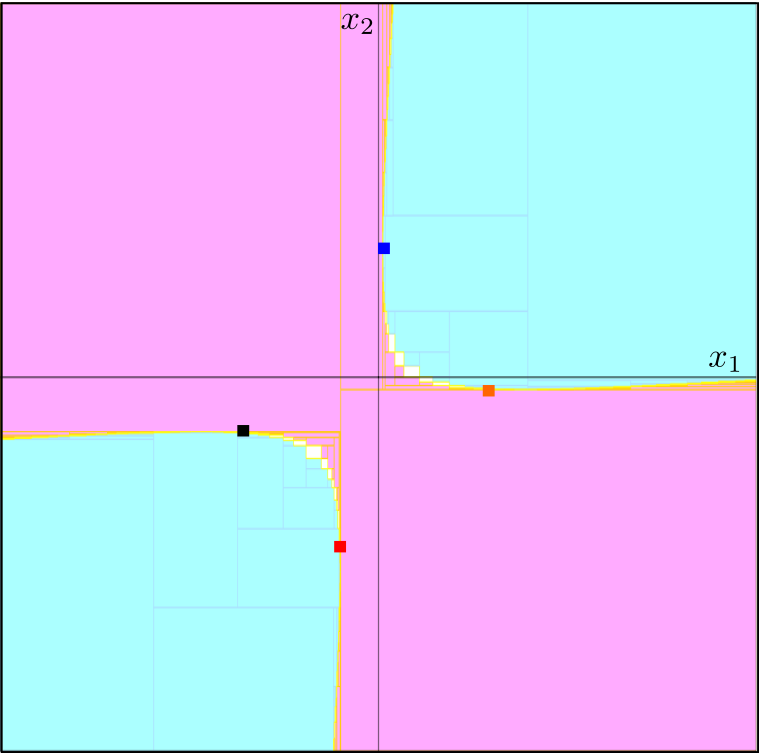}
\par\end{centering}
\caption{Approximation of the hyperbola area obtained by our minimal separator
for the hyperbola set}
\label{fig:siviahyp1}
\end{figure}

Figure \ref{fig:siviahyp3} corresponds to the approximation obtained
with the same accuracy with a classical forward-backward contractor.
The benefice of our method seems small, but we will see later, that
the improvement can become significant when the components of $\mathbf{q}$
are larger.

\begin{figure}[H]
\begin{centering}
\includegraphics[width=8cm]{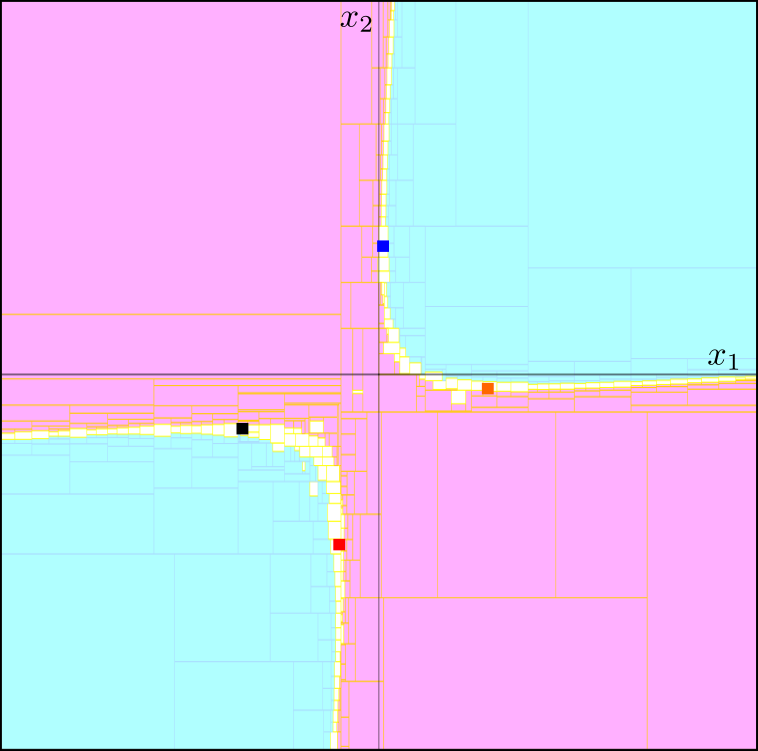}
\par\end{centering}
\caption{Hyperbola area computed using a classical forward-backward contractor}
\label{fig:siviahyp3}
\end{figure}
For $\mathbf{q}=(-1,1,1,3,30,-2)$ we only have two cardinal points
(West and East). The formula provided by Proposition \ref{prop:bigformula}
is still valid and we are able to generate Figure \ref{fig:siviahyp2}.
This shows the ability of the symmetries to consider different situations
easily, elegantly and safely. 

\begin{figure}[H]
\begin{centering}
\includegraphics[width=8cm]{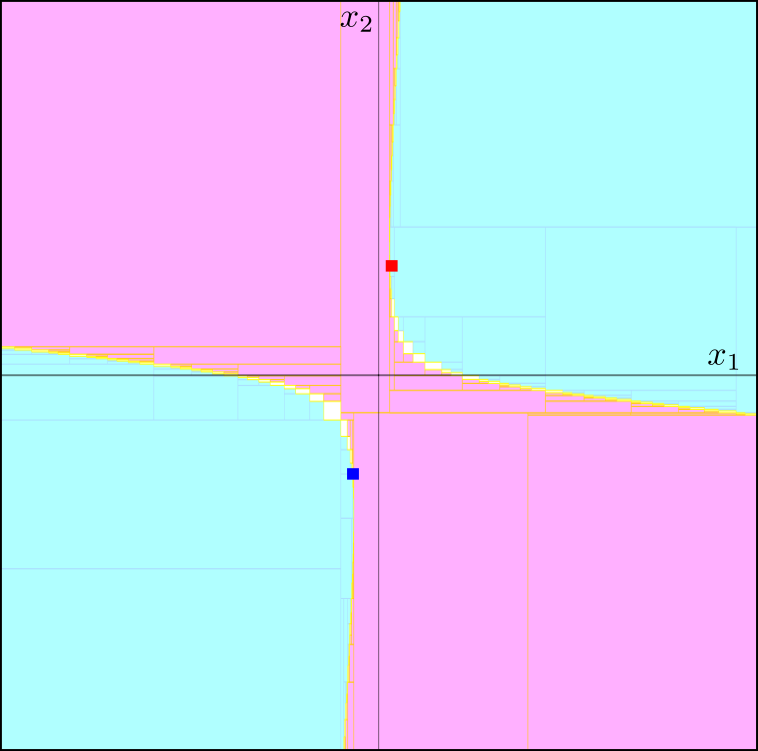}
\par\end{centering}
\caption{Illustration of the application of the separator for an hyperbola
set with two cardinal points only }
\label{fig:siviahyp2}
\end{figure}

\section{Application\label{sec:Application}}

Interval methods have been used for localization of robots for several
decades \citep{JaulinBook01}\citep{robloc}\citep{colle_galerne13}\citep{Drevelle:Bonnifait:09}.
This section proposes to deal with a specific localization problem
where pseudo distances are measured. 

\subsection{Hyperbola from foci}

In this subsection, we show that an equation involving pseudo-distances
corresponds to an hyperbola.
\begin{prop}
\label{prop:hyperbola:foci}Consider two points $\mathbf{a},\mathbf{b}$
of the plane. The set $\mathbb{X}$ of all points such that 
\begin{equation}
\|\mathbf{x}-\mathbf{a}\|-\|\mathbf{x}-\mathbf{b}\|\leq\ell
\end{equation}
 is an hyperbola area with foci $\mathbf{a},\mathbf{b}$. The set
$\mathbb{X}$ is defined by the inequality
\begin{equation}
\mathbf{f}_{\mathbf{a},\mathbf{b},\ell}(\mathbf{x})\leq0
\end{equation}
where
\begin{equation}
\mathbf{f}_{\mathbf{a},\mathbf{b},\ell}(\mathbf{x})=q_{0}+q_{1}x_{1}+q_{2}x_{2}+q_{3}x_{1}^{2}+q_{4}x_{1}x_{2}+q_{5}x_{2}^{2}\label{eq:fabl}
\end{equation}
with
\[
\begin{array}{ccl}
q_{0} & =- & a_{1}^{4}-2a_{1}^{2}a_{2}^{2}+2a_{1}^{2}b_{1}^{2}+2a_{1}^{2}b_{2}^{2}+2a_{1}^{2}\ell^{2}\\
 &  & -a_{2}^{4}+2a_{2}^{2}b_{1}^{2}+2a_{2}^{2}b_{2}^{2}\\
 &  & +2a_{2}^{2}\ell^{2}-b_{1}^{4}-2b_{1}^{2}b_{2}^{2}+2b_{1}^{2}\ell^{2}-b_{2}^{4}+2b_{2}^{2}\ell^{2}-\ell^{4}\\
q_{1} & = & 4a_{1}^{3}-4a_{1}^{2}b_{1}+4a_{1}a_{2}^{2}-4a_{1}b_{1}^{2}-4a_{1}b_{2}^{2}\\
 &  & -4a_{1}\ell^{2}-4a_{2}^{2}b_{1}+4b_{1}^{3}+4b_{1}b_{2}^{2}-4b_{1}\ell^{2}\\
q_{2} & = & 4a_{1}^{2}a_{2}-4a_{1}^{2}b_{2}+4a_{2}^{3}-4a_{2}^{2}b_{2}-4a_{2}b_{1}^{2}\\
 &  & -4a_{2}b_{2}^{2}-4a_{2}\ell^{2}+4b_{1}^{2}b_{2}+4b_{2}^{3}-4b_{2}\ell^{2}\\
q_{3} & = & -4a_{1}^{2}+8a_{1}b_{1}-4b_{1}^{2}+4\ell^{2}\\
q_{4} & = & -8a_{1}a_{2}+8a_{1}b_{2}+8a_{2}b_{1}-8b_{1}b_{2}\\
q_{5} & = & -4a_{2}^{2}+8a_{2}b_{2}-4b_{2}^{2}+4\ell^{2}
\end{array}
\]
 
\end{prop}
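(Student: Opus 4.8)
The plan is to eliminate the two square roots in $\|\mathbf{x}-\mathbf{a}\|-\|\mathbf{x}-\mathbf{b}\|\le\ell$ by squaring twice, and then to match the resulting quadratic against $\mathbf{f}_{\mathbf{a},\mathbf{b},\ell}$. Writing $d_{a}=\|\mathbf{x}-\mathbf{a}\|$ and $d_{b}=\|\mathbf{x}-\mathbf{b}\|$, I would start from the boundary equality $d_{a}=d_{b}+\ell$ and square once to get
\[
d_{a}^{2}-d_{b}^{2}-\ell^{2}=2\ell\,d_{b}.
\]
The key structural fact is that the terms $x_{1}^{2}+x_{2}^{2}$ appear identically in $d_{a}^{2}$ and in $d_{b}^{2}$, so they cancel and the left-hand side is \emph{affine} in $\mathbf{x}$. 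Squaring a second time,
\[
\bigl(d_{a}^{2}-d_{b}^{2}-\ell^{2}\bigr)^{2}=4\ell^{2}d_{b}^{2},
\]
then produces a genuine degree-two equation. I would therefore define
\[
\mathbf{f}_{\mathbf{a},\mathbf{b},\ell}(\mathbf{x})=4\ell^{2}\|\mathbf{x}-\mathbf{b}\|^{2}-\bigl(\|\mathbf{x}-\mathbf{a}\|^{2}-\|\mathbf{x}-\mathbf{b}\|^{2}-\ell^{2}\bigr)^{2}
\]
and expand it.

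The bulk of the verification is then a routine but lengthy expansion: collecting the monomials $1,x_{1},x_{2},x_{1}^{2},x_{1}x_{2},x_{2}^{2}$ in the expression above should reproduce exactly the six coefficients $q_{0},\dots,q_{5}$ listed in the statement. I would spot-check the leading coefficients, where the cancellation of the quartic distance terms makes the computation transparent: the $x_{1}^{2}$ coefficient is $4\ell^{2}-4(a_{1}-b_{1})^{2}=q_{3}$, the $x_{1}x_{2}$ coefficient is $-8(a_{1}-b_{1})(a_{2}-b_{2})=q_{4}$, and similarly for $q_{5}$, matching the proposition. As a consistency check that the curve is really a hyperbola, I would compute $\det\mathbf{Q}=q_{3}q_{5}-\tfrac{1}{4}q_{4}^{2}=16\,\ell^{2}\bigl(\ell^{2}-\|\mathbf{a}-\mathbf{b}\|^{2}\bigr)$, which is negative whenever $0<|\ell|<\|\mathbf{a}-\mathbf{b}\|$; this is exactly the criterion $\det\mathbf{Q}<0$ derived in the Introduction.

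Finally, to pass from the equation $\mathbf{f}_{\mathbf{a},\mathbf{b},\ell}=0$ to the inequality, I would use the factorization
\[
\mathbf{f}_{\mathbf{a},\mathbf{b},\ell}(\mathbf{x})=-\bigl((d_{a}-d_{b})^{2}-\ell^{2}\bigr)\bigl((d_{a}+d_{b})^{2}-\ell^{2}\bigr).
\]
The triangle inequality gives $d_{a}+d_{b}\ge\|\mathbf{a}-\mathbf{b}\|>|\ell|$, so the second factor is strictly positive, and hence $\mathbf{f}_{\mathbf{a},\mathbf{b},\ell}(\mathbf{x})\le0\Leftrightarrow|d_{a}-d_{b}|\ge|\ell|$. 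I expect the main obstacle to lie precisely here: each squaring is only a one-way implication, so the polynomial $\mathbf{f}_{\mathbf{a},\mathbf{b},\ell}=0$ captures \emph{both} branches of the hyperbola (it encodes $d_{a}-d_{b}=\pm\ell$), whereas the original constraint is the signed inequality $d_{a}-d_{b}\le\ell$. The delicate step is therefore a careful sign analysis — tracking the sign of $d_{b}+\ell$ and of $2\ell\,d_{b}$ through the two squarings — to confirm which connected component the original inequality selects and to handle the conjugate branch $d_{a}-d_{b}=-\ell$ introduced by squaring; in the localization application only the branch corresponding to the measured time difference is physically relevant, so this branch selection is exactly where the geometric meaning of $\mathbb{X}$ must be reconciled with the algebraic set $\{\mathbf{f}_{\mathbf{a},\mathbf{b},\ell}\le0\}$.
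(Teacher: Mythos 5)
Your derivation follows the same route as the paper's proof: square twice to eliminate the radicals, observe that the quartic distance terms cancel, and expand. The only structural difference is which term you isolate before the second squaring. The paper isolates $2\|\mathbf{x}-\mathbf{a}\|\,\|\mathbf{x}-\mathbf{b}\|$ and arrives at $\left(d_{a}^{2}+d_{b}^{2}-\ell^{2}\right)^{2}-4d_{a}^{2}d_{b}^{2}=0$, then simply says ``develop the expression''; you isolate $2\ell d_{b}$ and define $\mathbf{f}=4\ell^{2}d_{b}^{2}-\left(d_{a}^{2}-d_{b}^{2}-\ell^{2}\right)^{2}$. The two polynomials are negatives of one another (both factor, up to sign, as $\left((d_{a}-d_{b})^{2}-\ell^{2}\right)\left((d_{a}+d_{b})^{2}-\ell^{2}\right)$), and it is \emph{your} sign that matches the listed coefficients: the $x_{1}^{2}$ coefficient of the paper's displayed expression is $4(a_{1}-b_{1})^{2}-4\ell^{2}=-q_{3}$. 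So your choice is the correct normalization, and your spot checks of $q_{3},q_{4},q_{5}$ together with $\det\mathbf{Q}=16\ell^{2}(\ell^{2}-\|\mathbf{a}-\mathbf{b}\|^{2})$ supply the hyperbola-type verification that the paper's proof omits.

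The point you flag as the ``main obstacle'' is a genuine one, but it is a defect of the statement rather than of your argument, and the paper's proof does not address it at all (it stops at the $=0$ equation, with a one-way $\Rightarrow$ at the first squaring). Your factorization shows precisely that, whenever $|\ell|<\|\mathbf{a}-\mathbf{b}\|$ so that $(d_{a}+d_{b})^{2}-\ell^{2}>0$, one has $\mathbf{f}_{\mathbf{a},\mathbf{b},\ell}(\mathbf{x})\leq0\Leftrightarrow|d_{a}-d_{b}|\geq|\ell|$. For $\ell>0$ this is the union of the two convex regions cut off by the two branches, which is \emph{not} the set $\{d_{a}-d_{b}\leq\ell\}$: the midpoint of the segment $[\mathbf{a},\mathbf{b}]$ satisfies $d_{a}-d_{b}=0\leq\ell$ yet $\mathbf{f}>0$ there, and a point deep inside the branch surrounding $\mathbf{b}$ has $d_{a}-d_{b}>\ell$ yet $\mathbf{f}<0$. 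No amount of sign-tracking through the squarings can rescue the set equality verbatim; the correct conclusion, which your factorization already delivers, is that $\{\mathbf{f}\leq0\}$ is the disjoint union of $\{d_{a}-d_{b}\geq\ell\}$ and its mirror $\{d_{a}-d_{b}\leq-\ell\}$, and the physically relevant branch must be selected by an additional affine sign condition (e.g.\ the sign of $d_{a}^{2}-d_{b}^{2}-\ell^{2}$, which equals $2\ell d_{b}\geq0$ on the intended branch). State that as a corrected proposition rather than as an unresolved difficulty; with that emendation your proof is complete and is in fact more careful than the paper's.
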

\begin{proof}
We have
\begin{equation}
\begin{array}{cl}
 & \|\mathbf{x}-\mathbf{a}\|-\|\mathbf{x}-\mathbf{b}\|=\ell\\
\Rightarrow & \left(\|\mathbf{x}-\mathbf{a}\|-\|\mathbf{x}-\mathbf{b}\|\right)^{2}=\ell^{2}\\
\Leftrightarrow & \|\mathbf{x}-\mathbf{a}\|^{2}+\|\mathbf{x}-\mathbf{b}\|^{2}-2\|\mathbf{x}-\mathbf{a}\|\cdot\|\mathbf{x}-\mathbf{b}\|=\ell^{2}\\
\Leftrightarrow & \|\mathbf{x}-\mathbf{a}\|^{2}+\|\mathbf{x}-\mathbf{b}\|^{2}-\ell^{2}=2\|\mathbf{x}-\mathbf{a}\|\cdot\|\mathbf{x}-\mathbf{b}\|\\
\Leftrightarrow & \left(\|\mathbf{x}-\mathbf{a}\|^{2}+\|\mathbf{x}-\mathbf{b}\|^{2}-\ell^{2}\right)^{2}-4\|\mathbf{x}-\mathbf{a}\|^{2}\cdot\|\mathbf{x}-\mathbf{b}\|^{2}=0
\end{array}
\end{equation}
i.e.,
\begin{equation}
\begin{array}{ccc}
\left((x_{1}-a_{1})^{2}+(x_{2}-a_{2})^{2}+(x_{1}-b_{1})^{2}+(x_{2}-b_{2})^{2}-\ell^{2}\right)^{2}\\
-4\left((x_{1}-a_{1})^{2}+(x_{2}-a_{2})^{2}\right)\left((x_{1}-b_{1})^{2}+(x_{2}-b_{2})^{2}\right) & = & 0
\end{array}
\end{equation}
We can develop the expression to get the coefficients of the proposition. 
\end{proof}

\subsection{Localization}

We consider an example taken from \citep{jaulin2021boundary} related
to localization which can be seen as special case of interval data
fitting problem \citep{Kreinovich:Shary:16}. Consider a robot which
emits a sound at an unknown time $t_{0}$. This sound is received
with a delay by three microphones located points $\mathbf{a}:(13,7),\mathbf{b}:(4,6),\mathbf{c}:(16,10)$
of the plane (see Figure \ref{fig:ellipseloc0}). Taking into account
the time of flight of the sound we want to estimate the position of
the object. 

\begin{figure}[H]
\begin{centering}
\includegraphics[width=7cm]{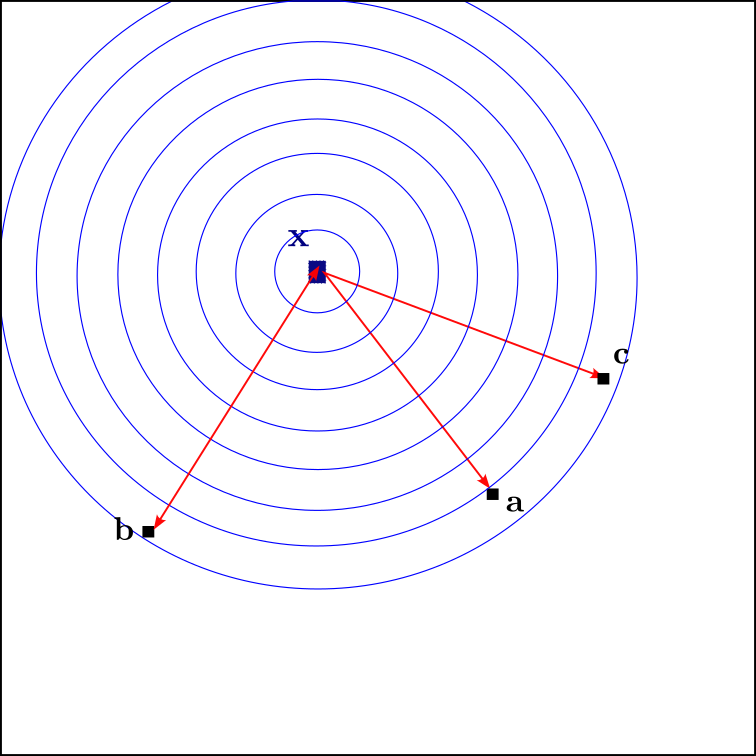}
\par\end{centering}
\caption{The robot at position $\mathbf{x}$ emits a sound received later by
three microphones $\mathbf{a}$, $\mathbf{b}$ and $\mathbf{c}$}
\label{fig:ellipseloc0}
\end{figure}

We have
\begin{equation}
\begin{array}{ccc}
\|\mathbf{x}-\mathbf{a}\| & = & c\cdot(t_{a}-t_{0})\\
\|\mathbf{x}-\mathbf{b}\| & = & c\cdot(t_{b}-t_{0})\\
\|\mathbf{x}-\mathbf{c}\| & = & c\cdot(t_{c}-t_{0})
\end{array}
\end{equation}
where $c$ is the sound speed and $t_{a},t_{b},t_{c}$ is the detection
time for microphones $\mathbf{a},\mathbf{b},\mathbf{c}$. We eliminate
$t_{0}$ which is unknown to get
\begin{equation}
\begin{array}{ccc}
\|\mathbf{x}-\mathbf{a}\|-\|\mathbf{x}-\mathbf{b}\| & = & c\cdot(t_{a}-t_{b})=\ell_{ab}\\
\|\mathbf{x}-\mathbf{a}\|-\|\mathbf{x}-\mathbf{c}\| & = & c\cdot(t_{a}-t_{c})=\ell_{ac}
\end{array}
\end{equation}
The quantity $\ell_{ab},\ell_{bc}$ are called \emph{pseudo-distances}.
We assume that we were able to measure the two pseudo distances to
get $\ell_{ab}\in[7.9,8.1]$ and $\ell_{ac}\in[3.9,4.1]$. The set
$\mathbb{X}$ of all feasible locations is defined by 
\begin{equation}
\begin{array}{ccccc}
\text{(i)} & \, & \|\mathbf{x}-\mathbf{a}\|-\|\mathbf{x}-\mathbf{b}\| & \in & [7.9,8.1]\\
\text{(ii)} &  & \|\mathbf{x}-\mathbf{a}\|-\|\mathbf{x}-\mathbf{c}\| & \in & [3.9,4.1]
\end{array}\label{eq:inequality:X}
\end{equation}

From Proposition \ref{prop:hyperbola:foci}, we get that $\mathbb{X}$
is defined by
\begin{equation}
\mathbb{X}=\mathbb{X}_{ab}\cap\mathbb{X}_{ac}
\end{equation}
where (see (\ref{eq:fabl})):
\begin{equation}
\mathbb{X}_{ab}:\left\{ \begin{array}{c}
\mathbf{f}_{\mathbf{a},\mathbf{b},8.1}(\mathbf{x})\leq0\\
\mathbf{f}_{\mathbf{a},\mathbf{b},7.9}(\mathbf{x})\geq0
\end{array}\right.
\end{equation}
and
\begin{equation}
\mathbb{X}_{ac}:\left\{ \begin{array}{c}
\mathbf{f}_{\mathbf{a},\mathbf{c},4.1}(\mathbf{x})\leq0\\
\mathbf{f}_{\mathbf{a},\mathbf{c},3.9}(\mathbf{x})\geq0
\end{array}\right.
\end{equation}
Using a paver, we get an inner and an outer approximations for the
set of $\mathbb{X}_{ab},\mathbb{X}_{ac}$ and $\mathbb{X}$. Figures
\ref{fig:tdoa1}, \ref{fig:tdoa3}, \ref{fig:tdoa5} have been generated
with a classical forward-backward contractor \citep{BenhamouICLP99}.
We observe a strong clustering effect with many uncertain boxes that
the separator is not able to classify.

\begin{figure}[H]
\begin{centering}
\includegraphics[width=8cm]{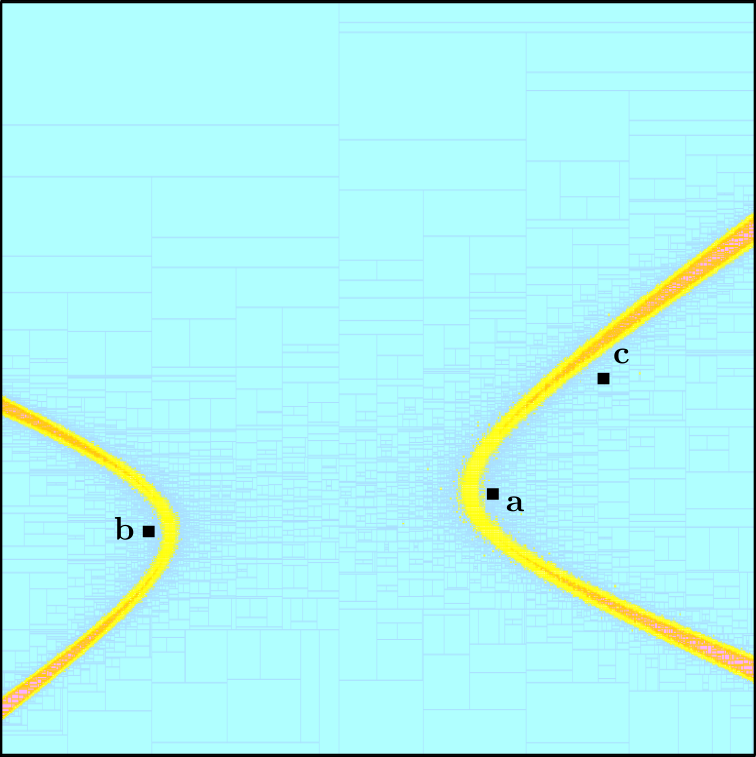}
\par\end{centering}
\caption{Set $\mathbb{X}_{ab}$ of positions consistent with microphones $\mathbf{a},\mathbf{b}$
(classic)}
 \label{fig:tdoa1}
\end{figure}

\begin{figure}[H]
\begin{centering}
\includegraphics[width=8cm]{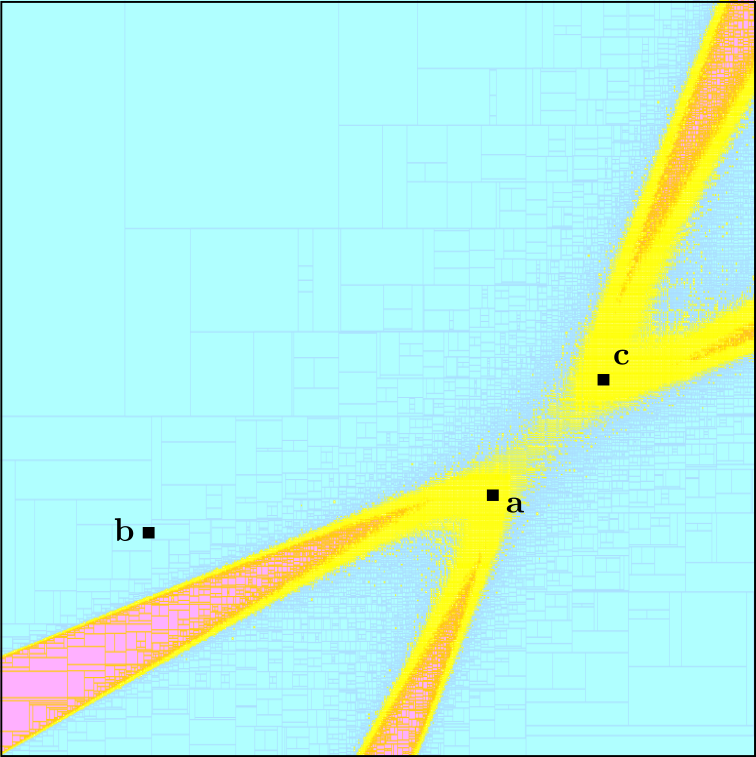}
\par\end{centering}
\caption{Set $\mathbb{X}_{ac}$ of positions consistent with $\mathbf{a},\mathbf{c}$
(classic)}
\label{fig:tdoa3}
\end{figure}

\begin{figure}[H]
\begin{centering}
\includegraphics[width=8cm]{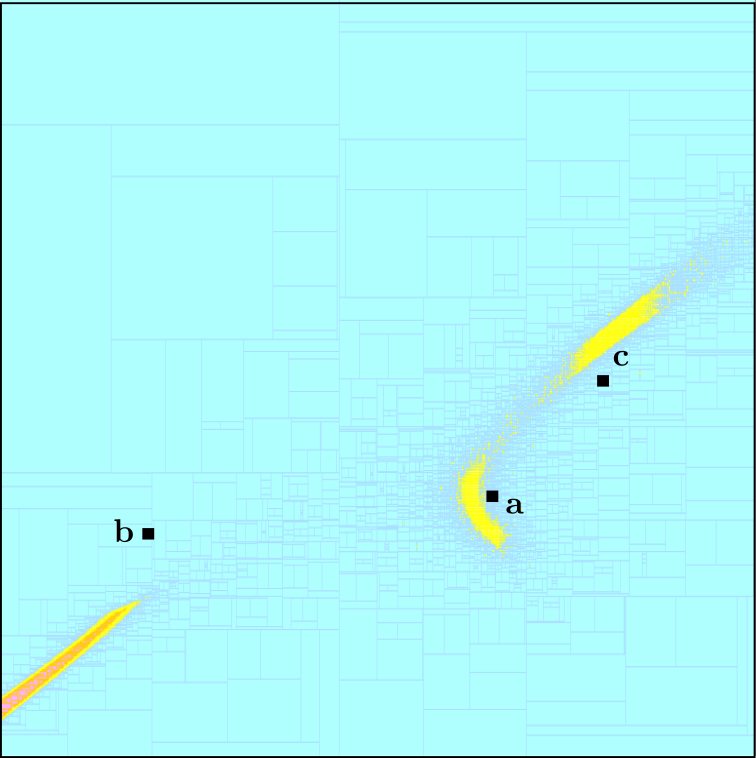}
\par\end{centering}
\caption{Set $\mathbb{X}$ of positions using the three microphones (classic)}
\label{fig:tdoa5}
\end{figure}

Figures \ref{fig:tdoa2}, \ref{fig:tdoa4}, \ref{fig:tdoa6} have
been generated using the minimal contractor (\ref{eq:ourminimal:ctc}).
For all figures, the frame box is $[0,20]\times[0,20]$ and the accuracy
is the same ($\varepsilon=0.05$). All results are guaranteed since
outward rounding is implemented \citep{revol2022testing}\citep{Revol17}.
The clustering effect almost disappeared. 

\begin{figure}[H]
\begin{centering}
\includegraphics[width=8cm]{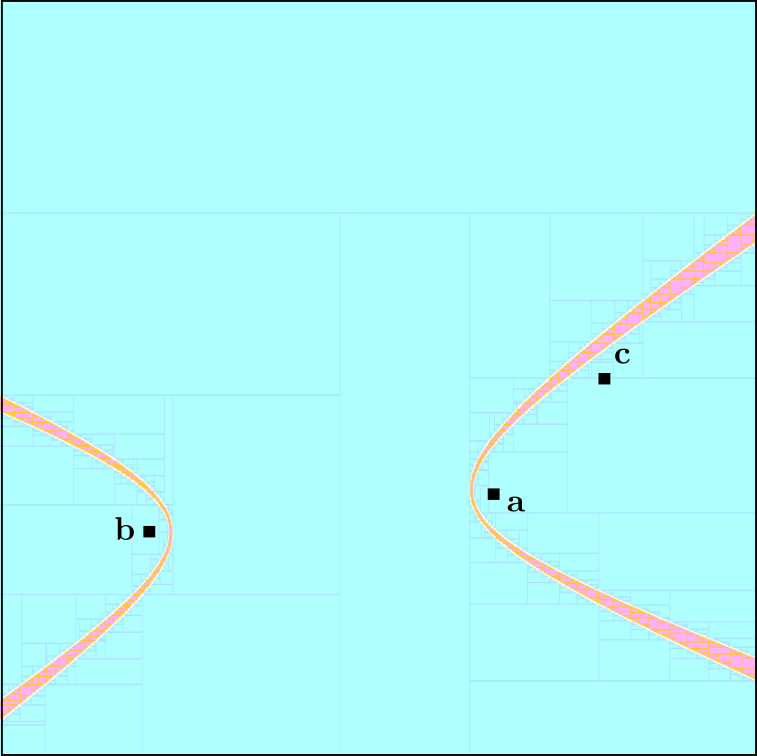}
\par\end{centering}
\caption{Set $\mathbb{X}_{ac}$ of positions consistent with microphones $\mathbf{a},\mathbf{c}$
(with the hyperbola separator)}
\label{fig:tdoa2}
\end{figure}

\begin{figure}[H]
\begin{centering}
\includegraphics[width=8cm]{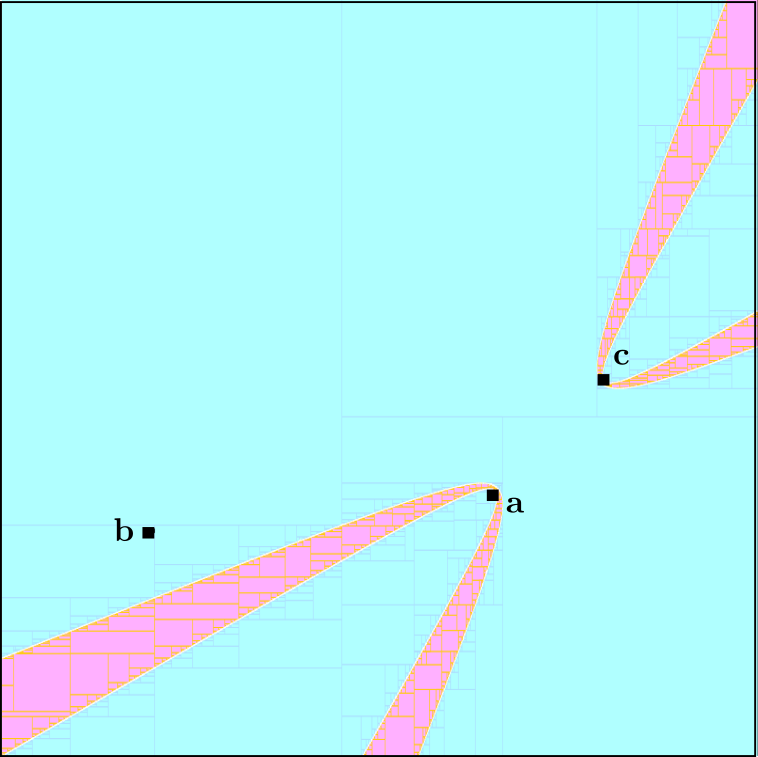}
\par\end{centering}
\caption{Set $\mathbb{X}_{ac}$ of positions consistent with microphones $\mathbf{a},\mathbf{c}$
(with the hyperbola separator)}
 \label{fig:tdoa4}
\end{figure}

\begin{figure}[H]
\begin{centering}
\includegraphics[width=8cm]{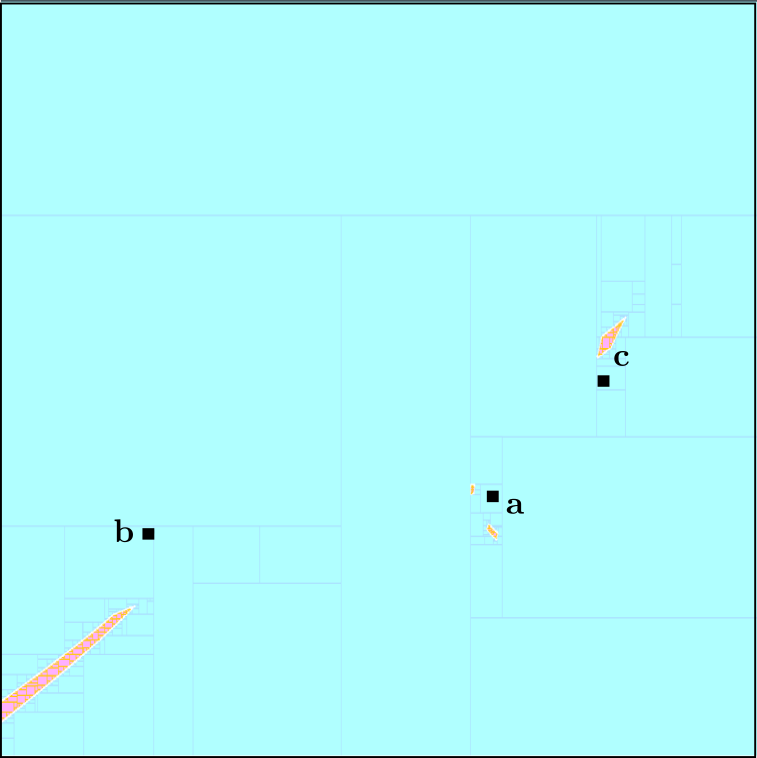}
\par\end{centering}
\caption{Set $\mathbb{X}$ of positions using the three microphones (with the
hyperbola separator)}
\label{fig:tdoa6}
\end{figure}

\section{Conclusion\label{sec:Conclusion}}

This paper has proposed a minimal contractor and a minimal separator
for an hyperbola area of the plane. The notion of actions derived
from hyperoctahedral symmetries allowed us to limit the analysis to
one portion of the constraint where the piece-wize monotonicity can
be assumed. The symmetries was used to extend the analysis to the
whole plane. 

The goal of this paper was also to provide a simple example which
illustrates the use of hyperoctahedral symmetries in order to build
minimal separators. Now, as shown in \citep{jaulin:quotient:2023},
the use of these symmetries is more interesting when we deal with
projection problems where quantifier elimination is needed. This type
of projection problem is indeed much more difficult to solve with
classical interval approaches \citep{ratschan}.

When we build an optimal contractor for a set $\mathbb{X}$ using
symmetries, the main difficulty is to find the portion of the set
that can be used to reconstruct $\mathbb{X}$ using the copy-paste
process allowed by the actions of the symmetries. For the hyperbola,
the pattern is a cardinal function and for the ellipse, it was a quarter
of the ellipse. But there is no general procedure to find the right
pattern. 

The Python code based on Codac \citep{codac} is given in \citep{jaulin:code:ctchyperbola:23}.

\bibliographystyle{plain}

\end{document}